\def\bC {\mathbb{C}}
\def\bN {\mathbb{N}}
\def\R{\mathbb{R}}
\def\e{\mathrm{e}}
\def\cA {\mathcal{A}}
\def\cD {\mathcal{D}}
\def\cF {\mathcal{F}}
\def\cS {\mathcal{S}}
\def\scrL{\mathscr{L}}
\def\a {{\underline{\alpha}}}
\def\d {{\rm d}}
\newcommand{ \Sgn}{\operatorname{sgn}}
\newcommand{\Supp}{\operatorname{supp}}
\newcommand{\Ker}{\operatorname{ker}}
\newcommand{\rg}{\operatorname{rg}}
\newcommand{\supp}{\operatorname{supp}}
\renewcommand{\Re}{\operatorname{Re}}
\newcommand{\ep}{\varepsilon}
\newcommand{\ba}{\begin{aligned}}
\newcommand{\ea}{\end{aligned}}
\newcommand{\be}{\begin{equation}}
\newcommand{\ee}{\end{equation}}
\newtheorem{theorem}{Theorem}[section]
\newtheorem{corollary}[theorem]{Corollary}
\newtheorem{lemma}[theorem]{Lemma}
\newtheorem{proposition}[theorem]{Proposition}
\theoremstyle{definition}
\newtheorem{remark}{Remark}
\begin{document}

\title[Growth-fragmentation with unbounded fragmentation rate]{Asynchronous exponential growth of the growth-fragmentation equation\\ with unbounded fragmentation rate}

\author[\'E. Bernard]{\'{E}tienne Bernard}
\address[\'E. Bernard]{Université Paris-Est, CERMICS (ENPC), INRIA, 77455 Marne-la-Vallée, France}
\email{etienne.bernard@enpc.fr}
\author[P. Gabriel]{Pierre Gabriel}
\address[P. Gabriel]{Laboratoire de Math\'ematiques de Versailles, UVSQ, CNRS, Universit\'e Paris-Saclay,  45 Avenue des \'Etats-Unis, 78035 Versailles cedex, France.}
\email[Corresponding author]{pierre.gabriel@uvsq.fr}
\thanks{The second author was supported by the ANR project KIBORD, ANR-13-BS01-0004, funded by the French Ministry of Research.}


\begin{abstract}
The objective is to prove the \emph{asynchronous exponential growth} of the growth-fragmentation equation in large weighted $L^1$ spaces and under general assumptions on the coefficients.
The key argument is the creation of moments for the solutions to the Cauchy problem,
resulting from the unboundedness of the total fragmentation rate.
It allows us to prove the quasi-compactness of the associated (rescaled) semigroup, which in turn provides the exponential convergence toward the projector on the Perron eigenfunction.
\end{abstract}

\keywords{Growth-fragmentation equation, uniform asynchronous exponential growth, positive semigroups, quasi-compactness, creation of moments.}

\subjclass[2010]{35B40 (primary), and 35P05, 35Q92, 35R09, 47D06 (secondary)}

\maketitle


\section{Introduction and main results}

In this article, we study the asymptotic behavior of the {\it growth-fragmentation equation} 
\begin{equation}
\label{eq:GF}
\left\{
    \begin{array}{lll}
    \partial_{t}f(t,x)+\partial_x\left(\tau(x) f(t,x)\right) = \cF f(t,x),&\qquad\qquad&  t,x>0,
    \vspace{2mm}\\
    f(t,0)=0& &t>0,
    \vspace{2mm}\\
    f(0,x) = f^{\rm{in}}(x)& & x>0.
    \end{array}
\right.
\end{equation}
This equation appears in the modeling of various physical or biological phenomena~\cite{Banasiak,MD86,BP,RTK}
as well as in telecommunication.
The unknown $f(t,x)$ represents the concentration at time $t$ of some ``particles'' with ``size'' $x>0,$
which can be for instance the volume of a cell~\cite{DHT}, the length of a fibrillar polymer~\cite{EPW}, the window size in data transmission over the Internet~\cite{ChafaiMalrieuParoux}, or the time elapsed since the last discharge of a neuron~\cite{PPS}.
Each particle grows with a rate $\tau(x)$ and splits according to the fragmentation operator $\cF$ which acts on a function $f(x)$ through
\[\cF f(x):=\cF_+f(x)-B(x)f(x).\]
The positive part $\cF_+$ is an integral operator given by
\be
\label{def:cF+}
\cF_+ f(x) := \int_0^1B\Bigl(\frac xz\Bigr)f\Bigl(\frac xz\Bigr)\frac{\wp(\d z)}{z}.
\ee
When a particle of size $x$ breaks with rate $B(x),$ it produces smaller particles of sizes $zx$ with $0<z<1$ distributed with respect to the fragmentation kernel $\wp.$

\

All along the paper except in Section~\ref{sec:Osgood}, the coefficients of the model are supposed to verify the following hypotheses:

\begin{itemize}

\item[\bf (H$\tau$)]
The growth rate $\tau:(0,\infty)\to(0,\infty)$ is a $C^1$ function which satisfies
\begin{equation}\label{as:tau0}
\frac1\tau\in L^1(0,1),
\end{equation}
and there exist $\nu_0\leq1$ and $\tau_1\geq\tau_0>0$ such that
\begin{equation}\label{as:tau_infty}
\forall x>0,\qquad \tau_0\mathbf1_{x\geq1} x^{\nu_0}\leq \tau(x)\leq\tau_1\max(1,x).
\end{equation}

\medskip

\item[\bf (H$B$)]
The total fragmentation rate $B:(0,\infty)\to[0,\infty)$ is a continuous function with a connected support
and there exist $\gamma_1\geq\gamma_0>0,$ $B_1\geq B_0>0$ and $x_0>0$ such that
\begin{equation}\label{as:B_infty}
\forall x>0,\qquad B_0\mathbf1_{x\geq x_0}x^{\gamma_0}\leq B(x)\leq B_1\max(1,x^{\gamma_1}).
\end{equation}

\medskip

\item[\bf (H$\wp$)]
The fragmentation kernel $\wp$ is a finite positive measure on the open interval $(0,1)$ such that
\be\label{as:mass_cons}\int_0^1 z\,\wp(\d z)=1.\ee

\end{itemize}

\

For any $\alpha\in\R$ we will use the following notation for the (possibly infinite) $\alpha$-moment of the fragmentation kernel
\[\wp_\alpha:=\int_0^1z^\alpha\wp(\d z),\]
and we define
\[\a:=\inf\{\alpha\in\R,\ \wp_\alpha<+\infty\}.\]
Hypothesis {\bf (H$\wp$)} ensures that $1=\wp_1<\wp_0<+\infty,$ so that $\a\in[-\infty,0],$ and $\alpha\mapsto\wp_\alpha$ is strictly decreasing on $(\a,+\infty).$ 
The zero-moment $\wp_0$ represents the mean number of fragments, and the first moment is related to their mean size:
if a particle of size $x$ breaks, the mean size of the fragments is $\frac{\wp_1}{\wp_0}x.$
Condition~\eqref{as:mass_cons} thus guarantees that the fragmentation operator preserves the total size,
{\it i.e.} the sum of all the sizes of the daughter particles is equal to the size of the mother particle (at the statistical level).

Classical examples of fragmentation kernels are the mitosis kernel $\wp=2\delta_{1/2},$ the asymmetrical division kernels $\wp=\delta_\theta+\delta_{1-\theta}$ with $\theta\in(0,1/2),$
and the power law kernels $\wp(\d z)=(\nu+2)z^\nu\d z$ with $\nu>-1.$
Notice that the power law kernels are physically relevant only for $\nu\leq0$ (see discussion in~\cite[Section~8.2.1]{Banasiak}),
which includes the uniform kernel $\wp(\d z)=2\,\d z.$

\

The long time behavior of the solutions is strongly related to the existence of $(\lambda,G,\phi)$ solution to the following Perron eigenvalue problem:
\be\label{eq:direct_Perron}(\tau G)'+\lambda G+BG=\mathcal F_+ G,\qquad G\geq0,\qquad \int_0^\infty G(x)\,\d x=1,\ee
and the dual problem:
\be\label{eq:dual_Perron}-\tau\phi'+\lambda\phi+B\phi=\mathcal F_+^*\phi,\qquad\phi\geq0,\qquad \int_0^\infty G(x)\phi(x)\,\d x=1,\ee
where
\[\cF_+^*\phi(x):=B(x)\int_0^1\phi(zx)\wp(\d z).\]
When $(\lambda,G,\phi)$ exists and for initial distributions which satisfy
\[\langle f^{\rm in},\phi\rangle:=\int_0^\infty f^{\rm in}(y)\phi(y)\,\d y<+\infty,\]
the solutions to Equation~\eqref{eq:GF} are expected to behave like
\[f(t,x)\sim\langle f^{\rm in},\phi\rangle\,G(x){\rm e}^{\lambda t}\qquad\text{when}\ t\to+\infty.\]
This property is sometimes called {\it asynchronous exponential growth}~\cite{Webb87} since it ensures that the shape of the initial distribution is forgotten for large times.
Asymptotically the population grows exponentially fast with a {\it Malthus parameter} $\lambda$ and is aligned to the {\it stable size distribution} $G.$

\

Asynchronous exponential growth for growth-fragmentation was first proved by Diekmann, Heijmans and Thieme~\cite{DHT}.
In this pioneer paper the size state space is supposed to be bounded, an assumption also made in~\cite{BPR,GreinerNagel,H84,H86b,RP}.
When the size variable lies in $(0,\infty)$ the \emph{General Relative Entropy} introduced in~\cite{MMP2}
allows to prove the asynchronous exponential growth in weighted $L^p$ spaces for fairly general coefficients,
but without rate of convergence.
Obtaining an exponential rate of convergence in the case of an unbounded state space produced a large literature since the result of Perthame and Ryzhik~\cite{PR05}.
Let us review here these existing results, some of which deal with the (slightly simpler) conservative form of the equation when the condition $\wp_1=1$ is replaced by $\wp_0=1$ (in this case $\lambda=0$ and $\phi=1$).

The exponential decay of the $L^1$ norm was obtained by analytical methods (functional inequalities) in~\cite{LP09,PPS,PR05} and probabilistic methods (coupling arguments) in~\cite{BCGMZ,Broda,Malrieu}.
However the convergence is controlled by a distance between the initial distribution and the asymptotic profile which is stronger than the $L^1$ norm.
A spectral gap was proved by means of Poincaré type inequalities in Hilbert spaces~\cite{BCG,CCM11,GS14,Monmarche},
and in weighted $L^1$ spaces by semigroup techniques~\cite{CCM10,MS} and probabilistic methods~\cite{Bertoin19,BW18,Bouguet}.
Let us also mention that another type of convergence than in norm was considered in~\cite{ZvBW2}, where a higher order pointwise asymptotic expansion is provided.
Besides, a spectral gap in weighted $\ell^1$ spaces has been recently proved in~\cite{BJS19} for the discrete growth-fragmentation model.

Convergence in weighted $L^1$ spaces is of particular interest.
First, weighted $L^1$ norms have physical interpretation: for instance the $L^1$ norm represents the total number of particles and the norm with weight $x$ corresponds  to the ``total mass'' of the population.
Second, the definition of asynchronous exponential growth involves the bracket $\langle f,\phi\rangle$ which is implicitly assumed to be finite, and the largest Lebesgue space in which it can take place is then $L^1$ with the weight $\phi.$
The aim of the present paper is to obtain, under general conditions on the coefficients, uniform exponential convergence in $L^1$ spaces with weights as close as possible to $\phi.$
We extend in this sense some of the results of~\cite{MS} (see the comments below Theorem~\ref{maintheorem}).

For any positive weight function $\psi$ we denote by $L^1(\psi)$ the Lebesgue space $L^1((0,\infty);\psi\,\d x)$ endowed with the norm $\|f\|_{L^1(\psi)}:=\|f\psi\|_{L^1},$
and we simply use the shorthand $L^1_\alpha$ for the choice $\psi(x)=(1+x)^\alpha$ with $\alpha\in\R.$

\

We start by recalling an existence and uniqueness result for the Perron eigenvalue problem, obtained from~\cite[Theorem~1]{DG10}, \cite[Theorems~1.9~and~1.10]{BCG} and \cite[Theorem~2.1]{BG1}.
It ensures in particular that under our assumptions $L^1(\phi)\simeq L^1_1.$

\begin{theorem}\label{th:recall}
Assume that Hypotheses~{\bf (H$\tau$-H$B$-H$\wp$)} are satisfied.
There exist a unique solution (in the distributional sense) $(\lambda,G)\in\R\times L^1_0$ to the Perron eigenvalue problem~\eqref{eq:direct_Perron}
and a unique dual eigenfunction $\phi\in C^1(0,\infty)$ such that $(\lambda,\phi)$ satisfies~\eqref{eq:dual_Perron}.
Moreover $\lambda>0,$ $G\in L^1_\alpha$ for all $\alpha>0,$ and there exists a constant $C>0$ such that for all $x>0$
\[\frac1C(1+x)\leq\phi(x)\leq C(1+x).\]
\end{theorem}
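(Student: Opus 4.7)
The plan is to verify that Hypotheses (H$\tau$-H$B$-H$\wp$) fall within the scope of the three references cited in the statement, and then to assemble the conclusions accordingly.

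For the existence and uniqueness of $(\lambda,G)\in\R\times L^1_0$ solving \eqref{eq:direct_Perron}, I would invoke \cite[Theorem~1]{DG10}. The power-type two-sided bounds \eqref{as:tau_infty}--\eqref{as:B_infty} together with the integrability $1/\tau\in L^1(0,1)$ fit exactly the assumptions made there. The proof in that reference proceeds by first solving a truncated problem on a bounded size interval, where a standard Krein--Rutman / Perron--Frobenius argument produces the principal eigenpair, and then passing to the limit in the truncation by means of a priori moment estimates; the unboundedness of $B$ at infinity is precisely what supplies the tightness needed to prevent mass from escaping.

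For the moment estimates $G\in L^1_\alpha$ (every $\alpha>0$) and the existence of the dual eigenfunction $\phi\in C^1(0,\infty)$, I would quote \cite[Theorems~1.9 and~1.10]{BCG}. The moment bounds follow from pairing the direct equation with $(1+x)^\alpha$: whenever $\alpha>\a$ the gain term contributes a factor $\wp_\alpha<\infty$ which, for large $x$, is dominated by the coercivity furnished by $B(x)\geq B_0 x^{\gamma_0}$; a bootstrap on $\alpha$ then yields arbitrary polynomial moments. The dual eigenfunction is constructed by a Krein--Rutman argument applied to the adjoint operator in a suitable weighted space of linearly growing functions.

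The two-sided bound $\frac1C(1+x)\leq\phi(x)\leq C(1+x)$ comes from \cite[Theorem~2.1]{BG1}. The upper bound is obtained by exhibiting $\psi(x)=A+Bx$ as a supersolution of the dual problem: using $\wp_1=1$ and the linear upper bound in \eqref{as:tau_infty}, one tunes $A,B$ so that $-\tau\psi'+\lambda\psi+B\psi\geq\cF_+^*\psi$, and a comparison principle on the dual semigroup gives $\phi\leq\psi$. The matching lower bound rests on the irreducibility of the dual semigroup under (H$B$)--(H$\wp$) together with the integrability $1/\tau\in L^1(0,1)$, which allows a positive subsolution defined on $[x_0,\infty)$ to be transported along the backward characteristics down to $x=0$. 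Finally, $\lambda>0$ is immediate from the weak formulation: testing \eqref{eq:direct_Perron} against $x$ and using $\wp_1=1$ to cancel the fragmentation contribution yields the Malthus identity $\lambda\int_0^\infty xG(x)\,\d x=\int_0^\infty\tau(x)G(x)\,\d x>0$.

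The main obstacle in reproducing the argument from scratch is the pointwise lower bound on $\phi$: existence, uniqueness and the upper bound all follow from fairly soft fixed-point and comparison arguments, whereas strict positivity of $\phi$ at every $x>0$—which is what makes $L^1(\phi)\simeq L^1_1$, and hence underlies all of the subsequent analysis in the paper—requires a genuine spreading property of the fragmentation semigroup and is the delicate step.
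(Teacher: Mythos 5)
Your proposal matches the paper's treatment: the paper offers no standalone proof of this theorem and simply assembles it from \cite[Theorem~1]{DG10}, \cite[Theorems~1.9 and~1.10]{BCG} and \cite[Theorem~2.1]{BG1}, which is exactly the citation-checking you carry out, and your auxiliary remarks (e.g.\ testing \eqref{eq:direct_Perron} against $x$ and using $\wp_1=1$ to get $\lambda\int_0^\infty xG=\int_0^\infty\tau G>0$) are consistent with it. One minor attribution slip: by the paper's own usage (see Section~\ref{sec:Osgood}), the two-sided bound $\phi(x)\simeq 1+x$ is the content of \cite[Theorems~1.9 and~1.10]{BCG} (linear growth at $+\infty$, and $\phi\sim const\times\e^{\Lambda(x)}$, hence bounded, as $x\to0$ under \eqref{as:tau0}), rather than of \cite[Theorem~2.1]{BG1}, whose setting is the bounded fragmentation rate where $\phi$ grows sublinearly.
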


\medskip

We are now in position to state the main results of the present paper, summarized in the following theorem.

\begin{theorem}
\label{maintheorem}
For any $\alpha\geq1$ and any $f^{\rm in}\in L^1_\alpha$ there exists a unique mild solution $f\in C([0,\infty),L^1_\alpha)$ to Equation~\eqref{eq:GF}.
If we assume additionally that 
\begin{enumerate}
\item[(i)] either $\wp$ is absolutely continuous with respect to the Lebesgue measure,
\item[(ii)] or $\supp\wp\subset[\epsilon,1-\epsilon]$ for some $\epsilon>0,$ and $\tau=const,$
\end{enumerate}
then for any $\alpha>\max(1,\a+2\gamma_1-2\gamma_0)$ there exist two constants $M,\sigma >0$ such that for all $f^{\rm in}\in L^1_\alpha$ and all $t\geq0$
\[\left\| f(t,\cdot)\e^{-\lambda t}-\langle f^{\rm in},\phi\rangle\,G\right\|_{L^{1}_{\alpha}}\leq
M\e^{-\sigma t}\left\|f^{\rm in}\right\|_{L^{1}_{\alpha}}.\]
\end{theorem}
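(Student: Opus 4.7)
For the first statement my plan is the standard perturbation approach. Decompose formally $\cA = \cA_0 + \cF_+$ with $\cA_0 f := -\partial_x(\tau f) - Bf$. The operator $\cA_0$ generates an explicit transport-with-absorption $C_0$-semigroup $(U^0_t)_{t\geq 0}$ on $L^1_\alpha$, obtained by solving the characteristics of $\tau$ and multiplying by the absorption factor $\exp\bigl(-\int_0^t B(X_s(x))\,\d s\bigr)$ along the flow. Although $\cF_+$ is unbounded (since $B$ is), condition~\eqref{as:mass_cons} together with $\alpha \geq 1$ makes it $\cA_0$-bounded in the Miyadera--Voigt sense, so that $\cA$ generates a positive $C_0$-semigroup $(T_t)_{t\geq 0}$ on $L^1_\alpha$ whose orbits produce the unique mild solution of~\eqref{eq:GF}. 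Uniqueness can alternatively be read off the duality identity against the eigenfunction $\phi$ of Theorem~\ref{th:recall}.

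\textbf{Rescaling and creation of moments.} Set $S_t := \e^{-\lambda t}T_t$; then $S_t$ has spectral bound $0$, with $G$ as positive eigenvector and $\langle\cdot,\phi\rangle$ as positive left eigenvector. The central dynamical input is a creation-of-moments estimate of the form
\be\label{eq:moments_plan}
\|S_t f^{\rm in}\|_{L^1_\beta} \;\leq\; C_{T,\beta}\,t^{-\kappa(\beta,\alpha)}\,\|f^{\rm in}\|_{L^1_\alpha},\qquad 0 < t \leq T,
\ee
valid for every $\alpha\geq1$ and every $\beta > \max(\alpha,\a)$, with suitable positive exponent $\kappa(\beta,\alpha)$. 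To prove~\eqref{eq:moments_plan} I would differentiate $\int f(t,x)(1+x)^\beta\,\d x$: the transport term produces a moment of order $\beta-1+\nu_0$ with controlled sign, while the fragmentation term contributes $-(1-\wp_\beta)B$, which by $(HB)$ and the fact that $\wp_\beta \to 0$ as $\beta \to \infty$ dominates the positive transport contribution for large $x$. Splitting at a finite threshold in $x$ absorbs the bad region, and interpolation between $L^1_\alpha$ and $L^1_\beta$ then yields the stated polynomial blow-up in $t^{-1}$.

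\textbf{Quasi-compactness and conclusion.} By the Perron--Frobenius theory for positive $C_0$-semigroups on Banach lattices, the announced exponential convergence follows as soon as $S_t$ is \emph{quasi-compact} with $0$ as dominant, algebraically simple eigenvalue. Irreducibility and simplicity follow from Theorem~\ref{th:recall} and the connected-support assumption in $(HB)$, so the real task is the bound $r_{\mathrm{ess}}(S_t) < 1$. I would iterate the Duhamel formula
\[S_t \;=\; \e^{-\lambda t} U^0_t \;+\; \int_0^t \e^{-\lambda(t-s)}\, U^0_{t-s}\,\cF_+\, S_s\,\d s\]
and split $S_t = R_t + K_t$, where $R_t$ collects the pure transport-absorption part (whose norm decays exponentially because $B$ is unbounded and the loss factor eventually beats $\e^{\lambda t}$), and $K_t$ gathers the iterated terms containing one or two copies of $\cF_+$. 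The aim is compactness of $K_t$. Under hypothesis~(i), absolute continuity of $\wp$ turns $\cF_+$ into an integral operator with a density kernel, so $K_t$ inherits an $L^\infty$-type regularisation, and~\eqref{eq:moments_plan} supplies tightness at infinity; Fréchet--Kolmogorov then yields compactness in $L^1_\alpha$. Under hypothesis~(ii) no smoothing is available, but $\supp\wp\subset[\epsilon,1-\epsilon]$ combined with $\tau=\mathrm{const}$ forces each application of $\cF_+$ to contract the support ratio strictly, and two iterations of $U^0\,\cF_+\,U^0\,\cF_+\,U^0$ produce, thanks to the one-dimensional geometry of characteristics, an equicontinuous kernel. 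The threshold $\alpha > \max(1,\a + 2\gamma_1 - 2\gamma_0)$ reflects that each application of $\cF_+$ essentially costs $\gamma_1 - \gamma_0$ units of weight which the absorption inside $U^0$ must recover, and that two such steps are needed for the compactness to kick in. The main obstacle is case~(ii): in the absence of any regularisation, the compactness must be extracted entirely from the geometric dispersion of the fragmentation, which calls for a delicate analysis of the iterated kernel.
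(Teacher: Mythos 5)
Your overall architecture (transport--absorption semigroup perturbed by $\cF_+$, creation of moments, a Duhamel formula with the transport part on the left, quasi-compactness plus a Perron--Frobenius/Webb-type theorem) is the same as the paper's, but two central steps are not actually proved in your proposal. First, the well-posedness. The claim that \eqref{as:mass_cons} makes $\cF_+$ ``$\cA_0$-bounded in the Miyadera--Voigt sense'' is unjustified precisely where it matters: in $L^1(\phi)\simeq L^1_1$ (the case $\alpha=1$ of the statement) the limit semigroup is conservative, so the best available estimate is $\int_0^t\|\cF_+S_sf\|\,\d s\leq\|f\|$, i.e.\ the borderline constant $q=1$, whereas Miyadera--Voigt requires $q<1$; for $\alpha>1$ one could hope to exploit $\wp_\alpha<1$, but not at $\alpha=1$. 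Moreover $(\cA_0+\cF_+,D(\cA_0))$ is not closed when $B$ is unbounded (this is the phenomenon of~\cite{EPW}), so the generator must be sought as (an extension of) the closure, and showing that the closure generates is the real work: the paper does this via Lumer--Phillips, proving density of the range of $\mu-\cA$ by a monotone iteration with uniform moment bounds (Theorem~\ref{th:gen_Tt}), and only then identifies this semigroup with the Kato/Dyson--Phillips one. Your alternative remark that uniqueness ``can be read off duality against $\phi$'' gives neither existence nor the honesty/conservativity property \eqref{eq:mass_cons} that is later needed to identify the projector.

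Second, the quasi-compactness. The paper never proves norm compactness: it proves \emph{weak} compactness of $\int_0^tS_{t-s}\cF_+S_s\,\d s$ by Dunford--Pettis (tightness from creation of moments; uniform integrability either from $\wp\in L^1(0,1)$ in case (i), or from a change of variables in the time integral when $\supp\wp\subset[\ep,1-\ep]$ and $\tau$ is constant in case (ii)), and then uses two ingredients absent from your plan: in $L^1$ weakly compact operators are strictly singular (Pelczy\'nski) and strictly singular perturbations do not change $\sigma_{e1}$ (Kato), so $r_{e}(T_t)\leq r(S_t)<1$; and the strong convex compactness property, to push weak compactness through the operator-valued time integrals, including the remainder $\int_0^t\big(\int_0^sS_{s-u}\cF_+S_u\,\d u\big)\cF_+T_{t-s}\,\d s$, whose very definition (like that of the Duhamel formula you write) requires the moment-creation estimate for the full semigroup, Lemma~\ref{lm:moments_creation_T} and Lemma~\ref{lm:Duhamel}. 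Your route instead asks for genuine compactness: in case (i) absolute continuity of $\wp$ only provides an $L^1$ density, not the claimed ``$L^\infty$-type regularisation'', so Fr\'echet--Kolmogorov is not within reach as stated; in case (ii) --- which you yourself identify as the main obstacle --- $\wp$ may contain atoms (e.g.\ $2\delta_{1/2}$), $\cF_+$ provides no smoothing, and no argument is given that the time-averaging alone yields an equicontinuous kernel. As it stands, the key inequality $r_{\mathrm{ess}}(T_t)<1$, hence the exponential convergence, is not established by your proposal; the weak-compactness machinery (Dunford--Pettis, strict singularity in $L^1$, strong convex compactness) is the missing idea that makes both cases (i) and (ii) tractable under the stated hypotheses.
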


\

\noindent Let us make some comments about the above results:
\begin{enumerate}
\item When $\a+2\gamma_1-2\gamma_0\leq1$ (for instance under condition \emph{(ii)} since in this case $\a=-\infty,$
or under condition~\emph{(i)} with $\gamma_1-\gamma_0\leq\frac12,$ as $\a$ is always nonpositive) the convergence holds for any $\alpha>1.$
In that event we get a close to optimal result since the $L^1_\alpha$ space can be chosen arbitrarily close to $L^1_1=L^1(\phi).$
The question whether it can be extended to $L^1(\phi)$ is still open.
A negative answer is given by~\cite{BG1} when $B$ is bounded (notice that in this case $\phi(x)\simeq (1+x)^k$ with $k<1$).
\item In~\cite{MS} the exponential convergence is proved for $\tau=const,$ $\gamma_0=\gamma_1\geq0$ and $\wp\in W^{1,1}(0,1)$ or $\wp=\delta_{\frac12},$
in the spaces $L^1_\alpha$ for all $\alpha>\alpha^*,$ where $\alpha^*\geq1$ is uniquely determined by $\wp_{\alpha^*}=B_0/B_1.$
We have generalized these assumptions, excepting the case $\gamma_0=\gamma_1=0$ which is not covered by~{\bf (H$B$)}.
Moreover we have strengthened the conclusion by extending the functional spaces for which it is valid.
Indeed, except for $B_0=B_1$ (implying that $B$ is exactly a power function for large sizes), we have $\alpha^*>1.$
\item For $\tau$ not satisfying~\eqref{as:tau0} we prove in Section~\ref{sec:Osgood} that the exponential convergence does not hold in $L^1(\phi).$
This ensures some kind of optimality for another result of~\cite{MS} which states that for $\tau(x)=x,$ $B(x)=x^{\gamma>0}$ and $\wp\in W^{1,1}(0,1),$ exponential convergence occurs in $L^1(x^{\alpha_1}+x^{\alpha_2})$ for any $0\leq \alpha_1<1<\alpha_2.$
Indeed these spaces are arbitrarily close to $L^1(x),$ which is equal to $L^1(\phi)$ when $\tau$ is linear.
\item We cannot expect a convergence result for $\tau(x)=x$ and $\wp$ general since it is known that for $\tau(x)=x$ and $\wp=\delta_{\frac12}$ the long time asymptotics of Equation~\eqref{eq:GF} consists in a periodic behavior~\cite{BDG,DoumicvanBrunt,vBALZ}.
\item Hypotheses~{\bf (H$\tau$)} and~{\bf (H$B$)} exclude the case $B=const$ and $\tau(x)=x,$ for which there is no Perron eigenfunction $G$ in $L^1_1$ and the behavior of the solutions to Equation~\eqref{eq:GF} is radically different from asynchronous exponential growth (see~\cite{BW16,DoumicEscobedo}).
\end{enumerate}

\medskip

The paper is structured as follows.
In Section~\ref{sec:semigroups} we prove the well-posedness of the growth-fragmentation equation and give some important properties of the associated semigroup.
In particular we obtain in Lemma~\ref{lm:moments_creation_T} new regularity estimates which are crucial for establishing the property of asynchronous exponential growth in Section~\ref{sec:asymptotic}.
In Section~\ref{sec:Osgood} we comment on the case when condition~\eqref{as:tau0} is not satisfied.


\section{Well-posedness of the Cauchy problem}
\label{sec:semigroups}

\subsection{Functional analytic setting}

First we look at the positive part $\cF_+$ of the fragmentation operator.
Since $B$ is a continuous function, the definition~\eqref{def:cF+} has a classical sense for $f$ continuous and compactly supported.
The continuous extension theorem ensures that it extends uniquely to a bounded positive operator from $L^1(\psi)$ to $L^1_\alpha,$
where $\psi(x)=(1+B(x))(1+x)^\alpha$ and $\alpha\in\R$ is such that $\wp_\alpha$ is finite.
From now on when talking about the operator $\mathcal F_+$ we mean this extension.

\begin{lemma}\label{lm:F+def}
Let $\alpha>\a$ and define $\psi(x)=(1+B(x))(1+x)^\alpha.$
There exists a unique bounded operator $\mathcal F_+:L^1(\psi)\to L^1_\alpha$ such that~\eqref{def:cF+} holds for any $f\in C_c(0,\infty).$
Additionally for all $f\in L^1(\psi)$
\[\|\mathcal F_+f\|_{L^1_\alpha}\leq \max(\wp_0,\wp_\alpha)\|f\|_{L^1(\psi)}.\]
\end{lemma}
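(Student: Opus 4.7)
The plan is to verify the desired inequality directly on the dense subspace $C_c(0,\infty)$ of $L^1(\psi)$ and then invoke the continuous extension theorem. Once the bound
\[\|\cF_+ f\|_{L^1_\alpha}\leq \max(\wp_0,\wp_\alpha)\,\|f\|_{L^1(\psi)}\]
is established for all $f\in C_c(0,\infty)$, uniqueness of the extension to $L^1(\psi)$ is automatic since continuous compactly supported functions are dense there (as $\psi$ is locally bounded on $(0,\infty)$ by continuity of $B$).

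First I would work with $|f|$ in place of $f$ and use Tonelli's theorem to write
\[\|\cF_+ f\|_{L^1_\alpha}\leq \int_0^\infty (1+x)^\alpha\int_0^1 B\Bigl(\tfrac{x}{z}\Bigr)\Bigl|f\Bigl(\tfrac{x}{z}\Bigr)\Bigr|\,\frac{\wp(\d z)}{z}\,\d x
=\int_0^1\!\!\int_0^\infty (1+zy)^\alpha B(y)|f(y)|\,\d y\,\wp(\d z),\]
after the change of variable $y=x/z$ in the inner integral (for fixed $z\in(0,1)$). The whole point is then to pull out the $z$-integral in a way that produces exactly the constant $\max(\wp_0,\wp_\alpha)$.

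The key pointwise estimate is
\[(1+zy)^\alpha\leq \max(1,z^\alpha)\,(1+y)^\alpha,\qquad z\in(0,1),\ y>0.\]
Indeed, for $\alpha\geq 0$ we use $1+zy\leq 1+y$, while for $\alpha<0$ we use $1+zy\geq z(1+y)$ (since $z\leq 1$), giving $(1+zy)^\alpha\leq z^\alpha(1+y)^\alpha$. Now $\int_0^1 \max(1,z^\alpha)\,\wp(\d z)$ equals $\wp_0$ if $\alpha\geq 0$ and $\wp_\alpha$ if $\alpha<0$; because $\alpha\mapsto\wp_\alpha$ is strictly decreasing, this integral is exactly $\max(\wp_0,\wp_\alpha)$. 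Inserting this into the previous display and bounding $B(y)\leq 1+B(y)$ produces
\[\|\cF_+ f\|_{L^1_\alpha}\leq \max(\wp_0,\wp_\alpha)\int_0^\infty (1+B(y))(1+y)^\alpha |f(y)|\,\d y=\max(\wp_0,\wp_\alpha)\,\|f\|_{L^1(\psi)}.\]

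The only mildly delicate point is justifying the exchange of integrals and the change of variables: both are legitimate for $f\in C_c(0,\infty)$ because then $y\mapsto B(y)|f(y)|$ is bounded with compact support in $(0,\infty)$, so everything is finite and positive. The final extension to $L^1(\psi)$ follows from the linearity of the map defined on $C_c(0,\infty)$, the above continuity estimate, and the density of $C_c(0,\infty)$ in $L^1(\psi)$; positivity is preserved in the limit. I do not foresee a real obstacle here---the lemma is essentially a bookkeeping computation on changes of variables, and the slightly subtle piece is just choosing the right uniform bound on $(1+zy)^\alpha$ that works simultaneously for positive and negative $\alpha$.
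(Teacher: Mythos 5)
Your proof is correct and follows essentially the same route as the paper: verify the bound on $C_c(0,\infty)$ via the change of variable $y=x/z$ and the pointwise estimate $(1+zy)^\alpha\leq\max(1,z^\alpha)(1+y)^\alpha$, then extend by density. The only cosmetic difference is that you unify the two cases $\alpha\geq0$ and $\alpha<0$ through $\max(1,z^\alpha)$, whereas the paper treats them separately, which yields the identical constant $\max(\wp_0,\wp_\alpha)$.
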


\begin{proof}
It suffices to check that the claimed inequality is valid for all $f\in C_c(0,\infty).$
Let $f\in C_c(0,\infty)$ and $\alpha$ as in the lemma.
If $\alpha\geq0$ we have
\begin{align*}
\biggl\|\int_0^1 B\Bigl(\frac{\cdot}{z}\Bigr)f\Bigl(\frac{\cdot}{z}\Bigr)\frac{\wp(\d z)}{z}\biggr\|_{L^1_\alpha}&\leq\int_0^\infty\int_0^1 B\Bigl(\frac xz\Bigr)\Bigl|f\Bigl(\frac xz\Bigr)\Bigr|\frac{\wp(\d z)}{z}(1+x)^\alpha\d x\\
&\leq \int_0^\infty B(y)|f(y)|\int_0^1(1+zy)^{\alpha}\wp(\d z)\,\d y\\
&\leq \wp_0\int_0^\infty B(y)|f(y)|(1+y)^{\alpha}\,\d y=\wp_0\|Bf\|_{L^1_\alpha},
\end{align*}
and if $\alpha<0$
\begin{align*}
\biggl\|\int_0^1 B\Bigl(\frac{\cdot}{z}\Bigr)f\Bigl(\frac{\cdot}{z}\Bigr)\frac{\wp(\d z)}{z}\biggr\|_{L^1_\alpha}&\leq \int_0^\infty B(y)|f(y)|\int_0^1(1+zy)^{\alpha}\wp(\d z)\,\d y\\
&\leq \wp_{\alpha}\int_0^\infty B(y)|f(y)|(1+y)^{\alpha}\,\d y= \wp_\alpha\|Bf\|_{L^1_\alpha}.
\end{align*}
\end{proof}

Now we define on $L^1(\phi)$ the unbounded operator
$$
\cA_0 f:=-(\tau f)' -\lambda f -Bf,
$$
where $\lambda$ is the Perron eigenvalue defined in~\eqref{eq:direct_Perron}, with domain
$$
D\left(\cA_0\right):=\left\lbrace f\in L^1(\phi): (\tau f)' \in L^1(\phi),\ (\tau f)(0)=0,\ \mbox{and }Bf \in L^1(\phi)\right\rbrace.
$$
Lemma~\ref{lm:F+def} ensures that $\cF_+$ is well defined on $D(\cA_0)$ since $L^1(\phi)\simeq L^1_1$ and $\a<1.$
It will be considered as a perturbation of $\cA_0,$ with the same domain.

\medskip

With these definitions, the abstract Cauchy problem
\begin{equation}\label{eq:abstract_resc}\left\{\begin{array}{l}
\dfrac{\d}{\d t}\,g=\cA_0 g+\cF_+g
\vspace{3mm}\\
g(0)=f^{\rm in}
\end{array}\right.\end{equation}
corresponds to Equation~\eqref{eq:GF} rescaled by the exponential growth of parameter $\lambda.$
In other words $g$ is solution to~\eqref{eq:abstract_resc} if and only if $f=g\,\e^{\lambda t}$ is solution to~\eqref{eq:GF}.
We will first prove that $(\cA_0,D(\cA_0))$ generates a strongly continuous semigroup, also called $C_0$-semigroup, $(S_t)_{t\geq0}$ which admits a useful explicit formulation.
Then we will prove that the closure of $(\cA_0+\cF_+,D(\cA_0))$ generates a $C_0$-semigroup $(T_t)_{t\geq0},$ which satisfies a Duhamel formula.
Finally we check that $(T_t)_{t\geq0}$ is also a $C_0$-semigroup on $L^1_\alpha$ for any $\alpha\geq1.$
The semigroup $(T_t)_{t\geq0}$ yields the unique (mild) solution $g(t)=T_tf^{\rm in}$ to the abstract Cauchy problem~\eqref{eq:abstract_resc}.

\subsection{A $C_{0}$-semigroup for $\cA_0$}

\begin{proposition}\label{prop:gen_St}
The transport operator $(\cA_0,D(\cA_0))$ generates a positive contraction semigroup $(S_t)_{t\geq0}$ on $L^1(\phi).$
\end{proposition}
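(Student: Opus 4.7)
The plan is to construct $(S_t)_{t\geq 0}$ explicitly by the method of characteristics and then identify its generator with $(\cA_0,D(\cA_0))$. Since $\tau\in C^1((0,\infty);(0,\infty))$, the ODE $\dot x=\tau(x)$ admits a smooth flow $\phi_t$ on $(0,\infty)$. Condition~\eqref{as:tau0} combined with the upper bound $\tau(x)\leq \tau_1\max(1,x)$ implies that $\psi(x):=\int_0^x\frac{\d y}{\tau(y)}$ defines a $C^1$ homeomorphism $[0,\infty)\to[0,\infty)$, so that $\phi_t(y)=\psi^{-1}(\psi(y)+t)$, while the backward flow $\phi_{-t}(x)=\psi^{-1}(\psi(x)-t)$ is defined only when $t<\psi(x)$. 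I then set
\[
(S_tf)(x):=\frac{\tau(\phi_{-t}(x))}{\tau(x)}\,f(\phi_{-t}(x))\,\exp\!\Bigl(-\lambda t-\int_0^t B(\phi_{-s}(x))\,\d s\Bigr)\mathbf{1}_{t<\psi(x)},
\]
which formally solves $\partial_t u+\tau\partial_x u+(\lambda+B)u=0$ for $u=\tau f$ along characteristics, with the boundary condition $(\tau f)(t,0)=0$ encoded by the indicator.

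The semigroup property $S_{t+s}=S_tS_s$ and positivity are immediate from the flow identity $\phi_{t+s}=\phi_t\circ\phi_s$ and the cocycle identity for the exponential weight. For contractivity in $L^1(\phi)$, the change of variable $x=\phi_t(y)$, whose Jacobian is $\d x=\frac{\tau(\phi_t(y))}{\tau(y)}\,\d y$, yields
\[
\|S_tf\|_{L^1(\phi)}=\int_0^\infty |f(y)|\,\phi(\phi_t(y))\,\e^{-\lambda t-\int_0^t B(\phi_s(y))\,\d s}\,\d y.
\]
The key ingredient is the \emph{dual Perron inequality} $-\tau\phi'+(\lambda+B)\phi=\cF_+^*\phi\geq0$: integrating it along the characteristic $s\mapsto\phi_s(y)$ gives the Gronwall-type bound
\[
\phi(\phi_t(y))\leq\phi(y)\,\e^{\lambda t+\int_0^t B(\phi_s(y))\,\d s},
\]
whence $\|S_tf\|_{L^1(\phi)}\leq\|f\|_{L^1(\phi)}$. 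Strong continuity $S_tf\to f$ in $L^1(\phi)$ at $t=0$ is checked first for $f\in C_c((0,\infty))$ by dominated convergence and the continuity of the flow, then extended by density and uniform contractivity.

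To identify the generator with $(\cA_0,D(\cA_0))$, I differentiate the explicit formula at $t=0$ to obtain, for $f\in D(\cA_0)$, that $(S_tf-f)/t\to\cA_0 f$ in $L^1(\phi)$. For the reverse inclusion, I solve the resolvent equation $\mu f-\cA_0 f=g$, equivalently $(\tau f)'+(\mu+\lambda+B)f=g$ with $(\tau f)(0)=0$, explicitly as
\[
f(x)=\frac{1}{\tau(x)}\int_0^x\exp\!\Bigl(-\int_y^x\frac{\mu+\lambda+B(z)}{\tau(z)}\,\d z\Bigr)g(y)\,\d y,\qquad \mu>0,
\]
and the same dual Perron inequality gives $\|f\|_{L^1(\phi)}\leq\mu^{-1}\|g\|_{L^1(\phi)}$. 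Hence $\mathrm{Range}(\mu-\cA_0)=L^1(\phi)$, and since $C_c^\infty((0,\infty))\subset D(\cA_0)$ is dense, the Lumer--Phillips theorem identifies $(\cA_0,D(\cA_0))$ as the generator.

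The main obstacle will be the careful bookkeeping at the boundary $x=0$: verifying that the explicit formula respects $(\tau f)(0)=0$ (here the integrability $1/\tau\in L^1(0,1)$ is crucial so that $\psi(0)=0$ and the backward characteristic reaches $0$ in finite time), and that the integration by parts used to test against $\phi$ produces no boundary contribution, thanks to the behaviour of $\tau|f|\phi$ at $0$ and at infinity.
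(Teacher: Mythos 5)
Your construction is sound as far as it goes, and parts of it are genuinely nice: the explicit formula for $S_t$ is exactly the one the paper derives later (Proposition~\ref{prop:explicit_St}), the contraction bound obtained by integrating the inequality $\tau\phi'\leq(\lambda+B)\phi$ along characteristics is a legitimate alternative to the paper's dissipativity computation $\langle\cA_0|f|,\phi\rangle=-\langle|f|,\cF_+^*\phi\rangle\leq0$, and your resolvent estimate $\|f\|_{L^1(\phi)}\leq\mu^{-1}\|g\|_{L^1(\phi)}$ does follow from the same monotonicity (the function $x\mapsto\phi(x)\e^{-\int_y^x\frac{\lambda+B}{\tau}}$ is nonincreasing, and the remaining factor integrates to at most $1/\mu$ after the substitution $u=\int_0^x\d z/\tau(z)$).

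The gap is in the identification of the generator, i.e.\ in showing that the explicit resolvent candidate actually belongs to $D(\cA_0)$. Recall that $D(\cA_0)$ requires \emph{separately} $(\tau f)'\in L^1(\phi)$ and $Bf\in L^1(\phi)$. Your a priori bound only gives $f\in L^1(\phi)$; it does not show $Bf\in L^1(\phi)$, so the surjectivity of $\mu-\cA_0$ onto $L^1(\phi)$ is not established, and by the same token your route (a) only shows that \emph{some extension} of $(\cA_0,D(\cA_0))$ — the generator $G$ of the explicit semigroup, whose natural domain is characterized by the combination $(\tau f)'+Bf\in L^1(\phi)$ — generates $S_t$, not $(\cA_0,D(\cA_0))$ itself. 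Ruling out compensation between $(\tau f)'$ and $Bf$ (equivalently, proving that $B$ is $\cA_0$-bounded) is precisely the technical heart of the paper's proof: one must bound
\[\Psi(y)=\frac{\e^{\Lambda(y)}}{\phi(y)}\int_y^\infty\Lambda'(x)\e^{-\Lambda(x)}\phi(x)\,\d x,\qquad \Lambda'=\frac{\lambda+B}{\tau},\]
uniformly in $y,$ which the paper does via l'H\^opital's rule using the hypotheses {\bf(H$\tau$)}--{\bf(H$B$)} through $x(\lambda+B(x))/\tau(x)\to\infty$ together with the two-sided estimate $\phi(x)\simeq1+x$ from Theorem~\ref{th:recall}. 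This step genuinely uses the structure of the coefficients and does not follow from the dual Perron inequality alone (the weight $\phi$ grows linearly, so the crude bound $\phi(x)\e^{-\int_y^x\frac{\lambda+B}{\tau}}\leq\phi(y)$ is not enough once the factor $B(x)$ is inserted). The distinction matters: for the perturbed operator $\cA_0+\cF_+$ the analogous compensation really does occur (this is why $D(\cA)\supsetneq D(\cA_0)$ in Section~\ref{sec:semigroups}), so the missing verification cannot be dismissed as routine. To repair your argument, insert the $\Psi$-type estimate (or an equivalent proof that the resolvent formula yields $Bf\in L^1(\phi)$) before invoking Lumer--Phillips or concluding $G=\cA_0$.
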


\begin{proof}
We prove that $\cA_0$ is dissipative and that $\mu-\cA_0$ is surjective for all $\mu>0.$
Then the Lumer-Philipps theorem (see~\cite[Theorem II.3.15]{EN} for instance) gives the result, since the density of $D(\cA_0)$ in $L^1(\phi)$ is clear.

\medskip

The dissipativity is due to the definition of $\phi,$
\[\cA_0^*\phi:=\tau\phi'-\lambda\phi-B\phi=-\cF_+^*\phi,\]
which ensures that for all $f\in D(\cA_0)$
\[\left\langle\cA_0 f, (\Sgn f)\phi\right\rangle=\langle\cA_0|f|,\phi\rangle=\langle|f|,\cA_0^*\phi\rangle=-\langle|f|,\cF_+^*\phi\rangle\leq0.\]

\medskip

For the surjectivity, let $\mu>0$ and $h\in L^1(\phi).$
The equation $(\mu-\cA_0)f=h$ is equivalent to solving the ordinary differential equation
\be\label{eq:fhmu}(\tau f)'(x)+(\lambda+\mu) f(x)+B(x)f(x)=h(x),\qquad x>0,\ee
with the initial condition $(\tau f)(0)=0.$
We obtain
\be\label{sol:fhmu}\tau(x)f(x)=\int_0^x \e^{-\int_y^x\frac{\mu+\lambda+B(z)}{\tau(z)}dz}h(y)\,dy.\ee
We need to verify that $f$ thus defined belongs to $D(\cA_0).$
Let's introduce
\[\Lambda(x):=\int_1^x\frac{\lambda+B(y)}{\tau(y)}\d y.\]
Since $\mu>0$ we get from~\eqref{sol:fhmu}
\begin{align*}
\int_0^\infty (\lambda+B(x))|f(x)|\phi(x)\,\d x&\leq\int_0^\infty\Lambda'(x)\e^{-\Lambda(x)}\phi(x) \int_0^x|h(y)|\e^{\Lambda(y)}\d y\d x\\
&\leq\int_0^\infty |h(y)|\phi(y)\bigg[\underbrace{\frac{\e^{\Lambda(y)}}{\phi(y)}\int_y^\infty \Lambda'(x)\e^{-\Lambda(x)}\phi(x)\,\d x}_{:=\Psi(y)}\bigg]\d y.
\end{align*}
We are going to prove that $\Psi(y)$ is bounded on $(0,\infty).$
As it is a continuous function which is bounded at $y=0,$ it suffices to check that it is bounded at $+\infty.$
Using that $y\Lambda'(y)=\frac{y(\lambda+B(y))}{\tau(y)}\to+\infty$ when $y\to+\infty$ we have
\[\frac{\d}{\d y}\big(y\e^{-\Lambda(y)}\big)=\big(1-y\Lambda'(y)\big)\e^{-\Lambda(y)}\sim_{y\to+\infty}- y\Lambda'(y)\e^{-\Lambda(y)}\]
and we deduce from the l'H\^opital's rule that
\[\int_y^\infty\Lambda'(x)\e^{-\Lambda(x)}x\,\d x\sim_{y\to+\infty} y\e^{-\Lambda(y)}.\]\\
Using the estimate on $\phi$ in Theorem~\ref{th:recall} we get for $y\geq1$
\[\Psi(y)\leq 2C^2 y^{-1}\e^{\Lambda(y)}\int_y^\infty \Lambda'(x)\e^{-\Lambda(x)}x\,\d x\xrightarrow[y\to+\infty]{}2C^2.\]
So $\Psi$ is bounded on $(0,\infty)$ and this ensures that $f$ and $Bf$ belong to $L^1(\phi).$
By Equation~\eqref{eq:fhmu} we deduce that $(\tau f)'\in L^1(\phi)$ too, and ultimately $f\in\cD(\cA_0).$

\medskip

The positivity of the semigroup results from the positivity of the resolvent $(\mu-\cA_0)^{-1},$ which is clear in~\eqref{sol:fhmu}.
\end{proof}

\begin{remark}
In the above proof, we have shown that $B$ is $\cA_0$-bounded
\[\forall f\in D(\cA_0),\qquad\left\|Bf\right\|_{L^1(\phi)}\leq \|\Psi\|_\infty\left\|\cA_0 f\right\|_{L^1(\phi)}\]
and as a consequence
\[D(\cA_0)=\left\lbrace f\in L^1(\phi): \cA_0f \in L^1(\phi)\ \text{and}\ (\tau f)(0)=0\right\rbrace.\]
\end{remark}

\

The semigroup $(S_t)_{t\geq0}$ generated by $\cA_0$ yields the solutions of the abstract Cauchy problem
\begin{equation}
\label{PCauchy0}
\left\{
    \begin{array}{l}
    \dfrac{\d}{\d t}u=\cA_0 u
     \vspace{2mm}\\
    u(0) = f.
    \end{array}
\right.
\end{equation}
Using the method of characteristics for transport equations, we can give another formula for the solution which provides, by identification, an explicit expression of the semigroup $(S_t)_{t\geq0}.$
As under Hypothesis~{\bf(H$\tau$)} the growth rate $\tau$ is globally Lipschitz, the Cauchy-Lipschitz Theorem ensures that for any $x\geq0$ the ordinary differential equation
$$
\left\{
\begin{array}{l}
\partial_t X\left(t,x\right)=\tau\left(X\left(t,x\right)\right)
\vspace{2mm}\\
X\left(0,x\right)=x
\end{array}
\right.
$$
has a unique maximal solution defined on the interval $[t_*(x),+\infty),$
where $t_*(x)\in(-\infty,0]$ is the time needed to reach the boundary $x=0,$ {\it i.e.} $X(t_*(x),x)=0,$
given by
$t_*(x)=-\int_0^x\frac{\d y}{\tau(y)}.$
Notice that we have used Assumption~\eqref{as:tau0} to get that $t_*(x)>-\infty.$
It is a standard result about the flow of an ordinary differential equation with a $C^1$ vector field that for any $t\geq0$ the mapping
\[X(t,\cdot):(0,\infty)\to(X(t,0),\infty)\]
is a diffeomorphism and that
\[X(t,\cdot)^{-1}=X(-t,\cdot).\]
Additionally we have for all $x\geq0$
\begin{equation}\label{eq:Xbound}
x\leq X(t,x)\leq (1+x)\e^{\tau_1t}-1.
\end{equation}
We can define for any $t\geq0$ and any $x>X(t,0)$
\[J(t,x):=\partial_xX(-t,x)\]
which is useful to compute explicitly the solutions of~\eqref{PCauchy0}.

\begin{proposition}\label{prop:explicit_St}
The semigroup $(S_t)_{t\geq0}$ is explicitly given by
\[S_{t}f(x) = \left\{\begin{array}{ll}
0&\text{if}\ 0< x\leq X(t,0),\\
f\left(X(-t,x)\right)J(t,x)\,\e^{-\int_{0}^{t}B\left(X(-s,x)\right)\d s}\e^{-\lambda t}\qquad&\text{if}\ x>X(t,0).
\end{array}\right.\]
\end{proposition}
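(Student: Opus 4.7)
My plan is to define the right-hand side of the claimed formula as a candidate family of operators $(U_t)_{t\ge 0}$ on $L^1(\phi)$, show that it is a strongly continuous semigroup whose generator extends $\cA_0$ (or agrees with it on a core), and conclude by Proposition~\ref{prop:gen_St} and the uniqueness of the semigroup generated by a given operator that $U_t=S_t$.

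First I would check boundedness. Performing the change of variable $y=X(-t,x)$ in $\int_0^\infty |U_tf(x)|\phi(x)\,\d x$ on the set $\{x>X(t,0)\}$, the Jacobian factor $J(t,x)\,\d x$ becomes $\d y$, the exponential fragmentation factor is bounded by $1$, and using Theorem~\ref{th:recall} together with~\eqref{eq:Xbound} one gets $\phi(X(t,y))\le C^2(1+y)\e^{\tau_1 t}$, which yields $\|U_tf\|_{L^1(\phi)}\le C^2\e^{(\tau_1-\lambda)t}\|f\|_{L^1(\phi)}$. Next, the semigroup property $U_{t+s}=U_t\circ U_s$ follows from the flow relation $X(t+s,\cdot)=X(t,X(s,\cdot))$, the chain rule for the Jacobian $J(t+s,x)=J(t,x)\,J(s,X(-t,x))$, and the splitting $\int_0^{t+s}B(X(-u,x))\,\d u=\int_0^t+\int_0^s B(X(-u,X(-t,x)))\,\d u$ after the change $u\mapsto u-t$ in the second integral. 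Strong continuity at $t=0$ is obtained by first establishing it on the dense subspace $C_c^1((0,\infty))$: for $f$ supported in $[a,b]$ with $a>0$, one has $X(t,0)<a$ for $t$ small enough, so the cutoff disappears, and the pointwise convergence $U_tf(x)\to f(x)$ combined with a dominated-convergence argument based on the uniform bound of the previous step yields the $L^1(\phi)$ convergence; density then extends this to all of $L^1(\phi)$.

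To identify the generator, I take $f\in C_c^1((0,\infty))$ and differentiate $U_tf$ at $t=0$. Using $\partial_tX(-t,x)|_{t=0}=-\tau(x)$, $\partial_tJ(t,x)|_{t=0}=-\tau'(x)$ (obtained from $\partial_s\partial_xX(s,x)=\tau'(X(s,x))\partial_xX(s,x)$ and the relation $J(t,X(t,y))=1/\partial_xX(t,y)$), and $\frac{\d}{\d t}\int_0^tB(X(-s,x))\,\d s|_{t=0}=B(x)$, a direct computation gives
\[\tfrac{\d}{\d t}U_tf(x)\Big|_{t=0}=-\tau(x)f'(x)-\tau'(x)f(x)-\lambda f(x)-B(x)f(x)=\cA_0f(x).\]
Since $C_c^1((0,\infty))$ is a core for $\cA_0$ (its elements lie in $D(\cA_0)$ and one can approximate any $f\in D(\cA_0)$ by truncation–regularisation in the graph norm), the generator of $(U_t)_{t\ge 0}$ coincides with $\cA_0$, and the conclusion $U_t=S_t$ follows from Proposition~\ref{prop:gen_St}.

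The main technical obstacle is the strong-continuity statement on the full space $L^1(\phi)$, because of the moving vacuum region $\{x\le X(t,0)\}$ created by the boundary condition. The argument above handles it via a density–dominated-convergence combination, but one must be slightly careful that the weight $\phi$ does not spoil the estimate: the linear bounds $\phi(x)\asymp 1+x$ and the sub-exponential control~\eqref{eq:Xbound} of $X(t,\cdot)$ are exactly what makes this work uniformly for $t$ in a neighbourhood of $0$.
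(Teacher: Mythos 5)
Your proposal is correct, but it identifies the two semigroups by a different mechanism than the paper. The paper fixes $f\in C^1_c(0,\infty)$, checks (via the explicit formulas $J(t,x)=\tau(X(-t,x))/\tau(x)$ and $\int_0^tB(X(-s,x))\,\d s=\int_{X(-t,x)}^x B/\tau$) that $t\mapsto\tilde S_tf$ stays in $D(\cA_0)$, is $C^1$ in $L^1(\phi)$ and solves $\frac{\d}{\d t}u=\cA_0u$ for \emph{all} $t\geq0$, and then invokes uniqueness of classical solutions of the abstract Cauchy problem for the generator $\cA_0$, concluding $S_t=\tilde S_t$ by density of $C^1_c$ and boundedness of both operators. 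You instead prove that the explicit family $(U_t)$ is itself a $C_0$-semigroup (boundedness, the semigroup law via the flow identity and the Jacobian chain rule, strong continuity), compute its generator only at $t=0$ on $C^1_c(0,\infty)$, and conclude by the core property of $C^1_c$ for $\cA_0$ together with the fact that a generator admits no proper generator extension. Both routes rest on the same $L^1(\phi)$ bound and on the density of $C^1_c$; yours buys a weaker differentiation requirement (derivative at $t=0$ only, which should moreover be checked in the $L^1(\phi)$ sense, not merely pointwise — easy here since supports stay in a fixed compact set for small $t$) at the price of two extra verifications the paper never needs: the semigroup law with its moving vacuum region $\{x\leq X(t,0)\}$ (your flow/Jacobian/exponent computation does handle the cutoff correctly), and above all the claim that $C^1_c(0,\infty)$ is a core for $\cA_0$. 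That claim is true and your truncation–mollification sketch is the right one (near $0$ use $(\tau f)(0)=0$ together with \eqref{as:tau0}, at infinity use $\tau(x)\leq\tau_1 x$), but it is a genuine additional argument; if you wanted to avoid it you could, as the paper does, verify the equation for all $t$ and appeal to uniqueness of classical solutions, which requires no core statement.
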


\begin{proof}
For any $t\geq0,$ the operator $\tilde S_t$ defined by
\[\tilde S_tf(x)=\left\{\begin{array}{ll}
0&\text{if}\ 0<x\leq X(t,0),\\
f\left(X(-t,x)\right)J(t,x)\,\e^{-\int_{0}^{t}B\left(X(-s,x)\right)\d s}\e^{-\lambda t}\qquad&\text{if}\ x>X(t,0).
\end{array}\right.\]
is bounded on $L^1(\phi)$ since using Theorem~\ref{th:recall} and~\eqref{eq:Xbound} we have
\begin{align*}
\|\tilde S_tf\|_{L^1(\phi)}&\leq\int_0^\infty |f(X(-t,x))|J(t,x)\phi(x)\,\d x\\
&\leq\int_0^\infty |f(y)|\phi(X(t,y))\,\d y \leq C^2\e^{\tau_1 t}\int_0^\infty |u(y)|\phi(y)\,dy.
\end{align*}
Additionally $S_tf=\tilde S_tf$ for all $f\in C^1_c(0,\infty)\subset D(\cA_0),$
because they are both the unique (classical) solution to the transport equation~\eqref{PCauchy0}.
Indeed for $f\in C^1_c(0,\infty)$
it is a classical result obtained via the method of characteristics for transport equations that $\tilde S_tf$ is the solution to equation~\eqref{PCauchy0}.
Yet it can also be checked by direct computations.
First remark that if we define
\[F(x):=\int_0^x\frac{\d y}{\tau(y)},\]
which has a sense because of~\eqref{as:tau0},
we have the explicit formula
\[X(t,x)=F^{-1}(F(x)+t).\]
From this we deduce
\[J(t,x)=\frac{\tau(X(-t,x))}{\tau(x)}\qquad\text{and}\qquad\int_0^tB(X(-s,x))\,\d s=\int_{X(-t,x)}^{x}\frac{B(y)}{\tau(y)}\,\d y.\]
It is easy to check that $t\mapsto \tilde S_tf$ given by
\[\tilde S_tf(x)=\left\{\begin{array}{ll}
0&\text{if}\ 0<x\leq X(t,0),\\
f(X(-t,x))\dfrac{\tau(X(-t,x))}{\tau(x)}\,\e^{-\int_{X(-t,x)}^{x}\frac{B(y)}{\tau(y)}\,\d y}\e^{-\lambda t}\qquad&\text{if}\ x>X(t,0).
\end{array}\right.\]
 lies in $C^1_c(0,\infty)\subset D(\cA_0),$ is continuously differentiable in $L^1(\phi),$ and that its derivative is equal to $\cA_0\tilde S_tf.$

We conclude by density of $C^1_c(0,\infty)$ in $L^1(\phi)$ that $S_t=\tilde S_t.$
\end{proof}

The operator $S_t$ has been defined in $L^1(\phi)\simeq L^1_1.$
But due to the explicit formulation in Proposition~\ref{prop:explicit_St} we easily see that $L^1_\alpha$ with $\alpha>1$ is invariant under $S_t.$
Additionally the following lemma ensures that it extends uniquely to a linear operator in $L^1_\alpha$ for any $\alpha<1,$ and that when $t>0$ it has a regularizing property (creation of moments).

\begin{lemma}\label{lm:moments_creation_S}
For any $\alpha<1$ the operator $S_t$ extends uniquely to a bounded positive operator in $L^1_\alpha.$
Moreover if $t>0$ then for any $\alpha\in\R$ and any $\beta>\alpha$ the operator $S_t$ (or its extension) maps $L^1_\alpha$ into $L^1_\beta.$
More precisley for any $\beta\geq\alpha$ there exists $C=C(\alpha,\beta)>0$ such that for all $t>0$ and all $f\in L^1_\alpha$
\[\|S_tf\|_{L^1_\beta}\leq Ct^{-(\beta-\alpha)/\gamma_0}\e^{\beta\tau_1t}\|f\|_{L^1_\alpha}.\]
\end{lemma}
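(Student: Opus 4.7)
My plan is to read everything from the explicit representation of the semigroup obtained in Proposition~\ref{prop:explicit_St}. Performing the change of variable $y=X(-t,x)$, which turns $J(t,x)\,\d x$ into $\d y$ and $\int_0^t B(X(-s,x))\,\d s$ into $\int_0^t B(X(u,y))\,\d u$, I rewrite
\[
\|S_t f\|_{L^1_\beta}=\e^{-\lambda t}\int_0^\infty |f(y)|\,K_t(y)\,\d y,\qquad K_t(y):=(1+X(t,y))^\beta\,\e^{-\int_0^t B(X(u,y))\,\d u},
\]
so the whole lemma reduces to a pointwise estimate on $K_t$. For the extension statement, taking $\beta=\alpha$ and combining~\eqref{eq:Xbound} with $X(t,y)\geq y$ gives $(1+X(t,y))^\alpha\leq (1+y)^\alpha\max(1,\e^{\alpha\tau_1 t})$, so $S_t$ is bounded in the $L^1_\alpha$-norm on $L^1_1$, and extends uniquely to $L^1_\alpha$ by density, since $L^1_1\supset C_c(0,\infty)$ is dense in $L^1_\alpha$ whenever $\alpha\leq 1$.

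For the smoothing estimate, the key idea is to split the integration at the threshold $x_0$ of Hypothesis~{\bf (H$B$)}. On $\{y\geq x_0\}$ the characteristic stays in the region where $B(v)\geq B_0 v^{\gamma_0}$ and $X(u,y)\geq y$, so that $\int_0^t B(X(u,y))\,\d u\geq B_0 y^{\gamma_0} t$; combining with $(1+X(t,y))^\beta\leq (1+y)^\beta\e^{\beta\tau_1 t}$ from~\eqref{eq:Xbound} (for $\beta\geq 0$) yields
\[
K_t(y)\leq \e^{\beta\tau_1 t}(1+y)^\alpha\cdot (1+y)^{\beta-\alpha}\,\e^{-B_0 y^{\gamma_0} t}.
\]
Using $1+y\leq (1+1/x_0)\,y$ for $y\geq x_0$, the elementary calculus identity
\[
\sup_{z\geq 0}\,z^k\e^{-cz^{\gamma_0}t}=\Bigl(\frac{k}{c\gamma_0\e}\Bigr)^{k/\gamma_0}t^{-k/\gamma_0}
\]
applied with $k=\beta-\alpha$ and $c=B_0$ produces exactly the announced factor $C(\alpha,\beta)\,t^{-(\beta-\alpha)/\gamma_0}$. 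On the complementary region $\{y\leq x_0\}$, no moment creation is needed: the bound $K_t(y)\leq (1+x_0)^\beta\e^{\beta\tau_1 t}$ combined with $(1+y)^\alpha\geq\min(1,(1+x_0)^\alpha)$ yields a contribution of order $\e^{\beta\tau_1 t}\|f\|_{L^1_\alpha}$.

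The main obstacle is absorbing this last contribution into the target bound $C\,t^{-(\beta-\alpha)/\gamma_0}\e^{\beta\tau_1 t}$ uniformly in $t>0$. For $t\leq 1$ it is immediate since $t^{-(\beta-\alpha)/\gamma_0}\geq 1$ whenever $\beta\geq\alpha$. For $t\geq 1$ with $\beta>\alpha$ and $\beta>0$, the function $t\mapsto t^{-(\beta-\alpha)/\gamma_0}\e^{\beta\tau_1 t}$ attains a strictly positive minimum on $(0,+\infty)$, so any bound of the form $C\e^{\beta\tau_1 t}$ is dominated by $C'\,t^{-(\beta-\alpha)/\gamma_0}\e^{\beta\tau_1 t}$. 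The borderline case $\beta=\alpha$ coincides with the semigroup-type estimate already used in the extension, and the case $\beta<0$ is handled analogously by replacing the growth bound on $(1+X(t,y))^\beta$ with the monotone bound $(1+X(t,y))^\beta\leq (1+y)^\beta$ and using the decay $\e^{-\lambda t}$ to compensate the residual constants.
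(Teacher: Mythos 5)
Your strategy is the same as the paper's: pass to the variable $y=X(-t,x)$ along the characteristics, split the integral at $x_0$, use $\int_0^tB(X(u,y))\,\d u\geq B_0y^{\gamma_0}t$ on $\{y\geq x_0\}$ together with \eqref{eq:Xbound}, optimize $y^{\beta-\alpha}\e^{-B_0y^{\gamma_0}t}$, and obtain the extension for $\alpha<1$ by taking $\beta=\alpha$ and density of $C_c(0,\infty)$ in $L^1_\alpha$. Up to that point your computations are fine, but the absorption of the $\{y\leq x_0\}$ contribution for $t\geq1$ is a genuine gap: the claim that, because $t\mapsto t^{-(\beta-\alpha)/\gamma_0}\e^{\beta\tau_1 t}$ has a strictly positive minimum, ``any bound of the form $C\e^{\beta\tau_1 t}$ is dominated by $C'\,t^{-(\beta-\alpha)/\gamma_0}\e^{\beta\tau_1 t}$'' is false. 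Dividing by $\e^{\beta\tau_1 t}$, the asserted domination reads $C\leq C'\,t^{-(\beta-\alpha)/\gamma_0}$, which fails as $t\to+\infty$ since $\beta>\alpha$; a positive lower bound for the right-hand side does not dominate a quantity that grows like $\e^{\beta\tau_1 t}$. The cure is the factor $\e^{-\lambda t}$ that you wrote in front of $\int|f|K_t$ and then dropped: since $\lambda>0$ (Theorem~\ref{th:recall}), one has $\e^{-\lambda t}\leq C_{\alpha,\beta}\,t^{-(\beta-\alpha)/\gamma_0}$ for $t\geq1$, and this is exactly how the paper's chain of inequalities, which carries the exponent $\beta\tau_1-\lambda$ to the very end, converts the bound $\e^{(\beta\tau_1-\lambda)t}\bigl(1+t^{-(\beta-\alpha)/\gamma_0}\bigr)\|f\|_{L^1_\alpha}$ into the stated estimate. (Alternatively, keep the killing term on $(0,x_0)$: every characteristic enters $[x_0,\infty)$ after time at most $F(x_0)=\int_0^{x_0}\d y/\tau(y)$, so the small-$y$ contribution also carries a factor $\e^{-B_0x_0^{\gamma_0}(t-F(x_0))_+}$, which again beats any negative power of $t$.)

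Your closing sentence on $\beta<0$ is also not a proof: bounding $(1+X(t,y))^\beta\leq(1+y)^\beta$ and ``using the decay $\e^{-\lambda t}$'' only produces the announced prefactor $\e^{\beta\tau_1 t}$ if $\lambda\geq|\beta|\tau_1$, which is not assumed. For large $t$ one genuinely needs the superexponential, $y$-uniform decay of $\e^{-\int_0^tB(X(u,y))\,\d u}$ (for instance via $X(u,y)\geq F^{-1}(u)$, so that $\int_0^tB(X(u,y))\,\d u\geq B_0\int_{F(x_0)}^{t}\bigl(F^{-1}(u)\bigr)^{\gamma_0}\d u$ grows superlinearly in $t$). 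The paper's displayed proof is itself written with the bound $(1+X(t,x))^\beta\leq\e^{\beta\tau_1 t}(1+x)^\beta$, i.e.\ for $\beta\geq0$, so you are no worse off there, but as stated your remark does not close that case. Apart from these two points, the argument coincides with the paper's proof.
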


\begin{proof}
We use that $X(s,x)\geq x$ for $x\geq0$ and Assumption~\eqref{as:B_infty} to obtain for $f\in C_c(0,\infty)$
\begin{align*}
\|S_tf\|_{L^1_\beta}&\leq\int_{X(t,0)}^\infty |f(X(-t,x))|J(t,x)\e^{-\int_0^t B(X(-s,x))\d s}\e^{-\lambda t}(1+x)^\beta\,\d x\\
&\leq\e^{-\lambda t}\int_0^\infty |f(x)|\e^{-\int_0^t B(X(t-s,x))\d s}(1+X(t,x))^\beta\,\d x\\
&\leq\e^{(\beta\tau_1-\lambda) t}\bigg[\int_0^{x_0} |f(x)|(1+x)^\beta \,\d x+\int_{x_0}^\infty |f(x)|\e^{-B_0 x^{\gamma_0}t}(1+x)^\beta\,\d x\bigg]\\
&\lesssim \e^{(\beta\tau_1-\lambda) t}\bigg[\int_0^{x_0} |f(x)|(1+x)^\alpha\,\d x+\int_{x_0}^\infty|f(x)|\e^{-B_0 x^{\gamma_0}t}x^{\beta-\alpha}(1+x)^\alpha\,\d x\bigg],
\end{align*}
where the symbol $\lesssim$ denotes $\leq const\times.$
The first part of the lemma (extension to $L^1_\alpha,$ $\alpha<1$) is obtained by taking $\beta=\alpha$ and using the density of $C_c(0,\infty)$ in $L^1_\alpha.$
The second part follows from the fact that for $t>0$ and $\beta>\alpha$
\[\sup_{x\geq0}\big(\e^{-B_0 x^{\gamma_0}t}x^{\beta-\alpha}\big)=\e^{(\beta-\alpha)/\gamma_0}\Big(\frac{\beta-\alpha}{\gamma_0B_0t}\Big)^{(\beta-\alpha)/\gamma_0}.\]

\end{proof}

\subsection{The perturbed semigroup}
We consider $\cF_+,$ with domain $D(\cA_0),$ as a perturbation of $\cA_0.$
Unfortunately, as noticed in~\cite{EPW}, the operator $(\cA_0+\cF_+,D(\cA_0))$ is not closed.
Yet it is dissipative.
Indeed the definition of $\phi$ yields for all $f\in D(\cA_0)$
\begin{align*}
\left\langle(\cA_0+\cF_+) f, (\Sgn f)\phi\right\rangle&=\langle\cA_0|f|+(\cF_+f)\Sgn f,\phi\rangle\\
&\leq\langle(\cA_0+\cF_+)|f|,\phi\rangle=\langle|f|,(\cA_0^*+\cF_+^*)\phi\rangle=0.
\end{align*}
This ensures that $(\cA_0+\cF_+,D(\cA_0))$  is closable and its closure $\overline{\cA_0+\cF_+}$ is again dissipative (see for instance~\cite[Proposition II.3.14]{EN}).
We set $\cA:=\overline{\cA_0+\cF_+}$ which is defined by
\begin{align*}
D(\cA)=\big\{f&\in L^1(\phi): \exists (f_n)_{n\in\bN}\subset D(\cA_0),\ \exists h\in L^1(\phi),\\
&\|f_n-f\|_{L^1(\phi)}\to0\ \text{and}\ \|(\cA_0+\cF_+)f_n-h\|_{L^1(\phi)}\to0\big\}
\end{align*}
and $\cA f=h$ for all $f\in D(\cA).$
The fact that $\cA_0+\cF_+$ is not closed means that $D(\cA_0)\varsubsetneq D(\cA)$ and it is due to the unboundedness of $B.$
The reason, well illustrated in~\cite{EPW}, is the existence of functions $f\in L^1(\phi)$ with $(\tau f)'\in L^1(\phi)$ and $(\tau f)(0)=0$ such that $Bf$ and $\cF_+f$ do not belong to $L^1(\phi),$
but due to compensation $\cF f=\cF_+f-Bf\in L^1(\phi).$
Such functions belonging to $D(\cA)\setminus D(\cA_0)$ cannot be compactly supported.
More precisely if we denote by $D(\cA)_c$ (resp. $D(\cA_0)_c$) the subspace of $D(\cA)$ (resp. $D(\cA_0)$) composed of functions with a compact support in $[0,+\infty),$
we have $D(\cA)_c=D(\cA_0)_c$ and $\cA f=\cA_0f+\cF_+f$ for all $f\in D(\cA)_c.$
Indeed if $f\in D(\cA)$ is such that $\supp f\subset[0,R]$ for some $R>0,$ we can find a sequence $(f_n)\subset C^1_c(0,R+1)$ and $h\in L^1(0,R+1)$ such that $\|f_n-f\|_{L^1(\phi)}\to0$ and $\|(\cA_0+\cF_+)f_n-h\|_{L^1(\phi)}\to0.$
We obtain from $\|f_n-f\|_{L^1(\phi)}\to0$ and the local boundedness of $B$ that $Bf_n\to Bf$ and $\cF_+f_n\to\cF_+ f$ in $L^1(\phi).$
Then using $\|(\cA_0+\cF_+)f_n-h\|_{L^1(\phi)}\to0$ we deduce that $(\tau f_n)'=\cF_+ f_n-Bf_n-\lambda f_n-(\cA_0+\cF_+)f_n\to \cF_+ f-Bf-\lambda f-h$ in $L^1(\phi),$
guaranteeing that $(\tau f)'\in L^1(\phi)$ and $(\tau f)(0)=0.$
So $f\in D(\cA_0)_c.$

\

We start by proving that $\cA$ generates a $C_0$-semigroup in $L^1(\phi).$
Then we verify that it is also a $C_0$-semigroup in $L^1_\alpha$ for all $\alpha>1.$
Finally we give some useful properties of this semigroup.

\begin{theorem}
\label{th:gen_Tt}
The unbounded operator $(\cA,D(\cA))$ generates a positive $C_0$-semigroup $\left(T_{t}\right)_{t\geq0}$ on $L^1(\phi),$
which is conservative in the sense that if $f\geq0$ then for any $t\geq0$
\begin{equation}\label{eq:mass_cons}
\|T_tf\|_{L^1(\phi)}=\|f\|_{L^1(\phi)}.
\end{equation}
\end{theorem}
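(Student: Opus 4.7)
The plan is to view $\cA = \overline{\cA_0 + \cF_+}$ as a Kato--Voigt positive perturbation of $\cA_0$ in the $L^1$-type space $L^1(\phi)$. The three required ingredients are essentially in place. First, Proposition~\ref{prop:gen_St} says $(\cA_0, D(\cA_0))$ generates a positive contraction $C_0$-semigroup. Second, Lemma~\ref{lm:F+def} applied with $\alpha=1$ (valid since $\a \leq 0$) together with the $\cA_0$-boundedness of $B$ recorded in the Remark after Proposition~\ref{prop:gen_St} show that $\cF_+$ is well-defined on $D(\cA_0)$ as a positive operator into $L^1(\phi)$. Third, and crucially, the dual Perron equation~\eqref{eq:dual_Perron} rewrites as $\cA_0^*\phi + \cF_+^*\phi = 0$, giving the dissipation balance
$$\int_0^\infty (\cA_0 + \cF_+) f \cdot \phi \, dx = \langle f, \cA_0^*\phi + \cF_+^*\phi\rangle = 0, \qquad f \in D(\cA_0),\ f \geq 0,$$
with \emph{equality} rather than mere inequality.

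Under these hypotheses the Kato--Voigt theorem produces a positive contraction $C_0$-semigroup $(T_t)_{t\geq 0}$ on $L^1(\phi)$ whose generator is the smallest closed extension of $(\cA_0 + \cF_+, D(\cA_0))$. This extension coincides with $\cA$ because $\cA_0 + \cF_+$ is already dissipative, as verified just before the theorem statement. A concrete construction proceeds via the approximations $\cA_0 + r\cF_+$ for $r \in [0,1)$: each generates a positive contraction semigroup through the Neumann resolvent
$$R_\mu^{(r)} = \sum_{n \geq 0} r^n (\mu - \cA_0)^{-1} \bigl[\cF_+ (\mu - \cA_0)^{-1}\bigr]^n,$$
and the monotone limit $R_\mu^{(r)} \uparrow R_\mu$ as $r \uparrow 1$ defines the resolvent of $\cA$.

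Conservativity then follows directly from the \emph{equality} in the balance. Since $\cA = \overline{\cA_0 + \cF_+}$, for any $g \in D(\cA)$ picking $g_n \in D(\cA_0)$ with $g_n \to g$ and $(\cA_0 + \cF_+) g_n \to \cA g$ in $L^1(\phi)$, the continuity of the functional $h \mapsto \langle h, \phi\rangle$ on $L^1(\phi)$ (its operator norm being $1$) yields $\langle \cA g, \phi\rangle = 0$. For $f \geq 0$ in $D(\cA)$, the strong derivative $\frac{d}{dt}T_t f = \cA T_t f$ then gives $\frac{d}{dt}\langle T_t f, \phi\rangle = 0$, and positivity of $T_t f$ implies
$$\|T_t f\|_{L^1(\phi)} = \langle T_t f, \phi\rangle = \langle f, \phi\rangle = \|f\|_{L^1(\phi)}.$$
The general case $f \in L^1(\phi)$, $f \geq 0$, follows by approximating $f$ with $n R_n f \in D(\cA)$ (positive since $R_n$ is positive) and invoking strong continuity of $(T_t)$.

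The main obstacle is the generation step: in the Kato--Voigt construction one must verify that the monotone limit $R_\mu$ is a genuine resolvent, not merely a pseudo-resolvent (equivalently, that $(T_t)$ is strongly continuous at $t=0$). This is the \emph{honesty} of the perturbation, and it hinges precisely on the equality in the dissipation balance together with $\wp$ being a finite measure supported in $(0,1)$ with $\wp_1 = 1$, which together exclude any mass leak into dust-like infinitesimal fragments.
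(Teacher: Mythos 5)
The generation step is where your argument has a genuine gap. The Kato--Voigt theorem does produce a positive contraction semigroup whose generator $\cG$ extends $(\cA_0+\cF_+,D(\cA_0))$, hence also extends its closure; but it does \emph{not} say that $\cG=\overline{\cA_0+\cF_+}$. Your justification --- that ``the extension coincides with $\cA$ because $\cA_0+\cF_+$ is already dissipative'' --- does not work: dissipativity of $\cA_0+\cF_+$ (hence of its closure) is automatic from the equality $\langle(\cA_0+\cF_+)f,\phi\rangle=0$, and it is perfectly compatible with $\cG$ being a strictly larger extension; in that dishonest case the closure is dissipative but fails the range condition and does not generate. Your closing heuristic --- that honesty ``hinges precisely on the equality in the dissipation balance together with $\wp_1=1$'' --- is also not a proof, and as a general principle it is false: pure fragmentation with a mass-conserving kernel and a formally conservative balance can still be dishonest (shattering), and the Remark following Theorem~\ref{th:gen_Tt} in the paper explicitly warns that $\langle(\cA_0+\cF_+)f,\phi\rangle=0$ alone does not guarantee stochasticity. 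A smaller inaccuracy: the monotone limit $R_\mu$ in the Kato construction is always a genuine resolvent (of the Kato extension, which generates a substochastic semigroup); the open question is not resolvent versus pseudo-resolvent but whether that generator equals the closure.

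What is missing is precisely the substance of the paper's proof, which applies Lumer--Phillips directly to $\cA=\overline{\cA_0+\cF_+}$ and establishes the range density of $\mu-\cA$ by hand: it shows that the cone $\cS_+$ of fast-decreasing functions is invariant under $\cF_+$ and $(\mu-\cA_0)^{-1}$, runs the monotone iteration $f_{n+1}=f_0+(\mu-\cA_0)^{-1}\cF_+f_n$ for $h\in\cS_+$, and closes uniform moment estimates (using $\wp_k<\wp_1=1$ for $k\geq2$, the bound $\tau\leq\tau_1\max(1,x)$, and $\mu$ large) to obtain $f_n\uparrow f_\infty$ with $(\mu-\cA)f_\infty=h$; density of $\cS=\cS_+-\cS_+$ then gives generation, and conservativity follows essentially as in your last paragraph --- which is legitimate there exactly because the generator has been identified with the closure. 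If you wish to keep the Kato--Voigt framing, you must supply an honesty argument of comparable substance (e.g.\ via the characterizations in the monograph of Banasiak--Arlotti or in the paper's reference on perturbed substochastic semigroups); it does not follow from the zero balance and the finiteness of $\wp$.
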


\begin{proof}
Again we use the Lumer-Philipps theorem.
We have already seen that $(\cA,D(\cA))$ is dissipative, so it remains to check that the range of $\mu-\cA$ is dense in $L^1(\phi)$ for some $\mu>0.$
To do so we generalize the proof in~\cite{EPW}.
Let define the set of fast decreasing functions
\[\cS:=\{f\in L^1(\phi): \forall k\geq0,\,f(x)=O(x^{-k})\ \text{when}\ x\to+\infty\}\]
and denote by $\cS_+$ its positive cone.
We will prove that $\cS,$ which is dense in $L^1(\phi),$ is included in the range of $\mu-\cA$ for $\mu$ large enough.
First we need an invariance property of $\cS_+.$

\medskip
\noindent{\bf Step 1:} \emph{The set $\cS_+$ is invariant under $\cF_+$ and $(\mu-\cA_0)^{-1}$ for any $\mu>0.$}\\
Let $f\in \cS_+,$ $k\geq0$ and $\mu>0.$
The positivity of $f$ is clearly preserved by $\cF_+$ and $(\mu-\cA_0)^{-1}$ which are positive operators.
Let $x_1>\max(1,x_0)$ and $c_1>0$ such that $\forall x\geq x_1,\ f(x)\leq c_1x^{-k}.$
If $k\geq\gamma_1+2$ we have for all $x>x_1$
\[\cF_+f(x)=\int_0^1B\big(\frac xz\big)f\big(\frac xz\big)\frac{\wp(dz)}{z}
\leq B_1 c_1x^{\gamma_1-k}\int_0^1z^{k-\gamma_1-1}\wp(dz)\leq B_1 c_1x^{\gamma_1-k}\]
and this ensures that $\cF_+f\in\cS.$
For $(\mu-\cA_0)^{-1}$ we start from~\eqref{sol:fhmu} and similarly as in the proof of Proposition~\ref{prop:gen_St} we write that for all $x>x_1$
\begin{align*}
(\mu-\cA_0)^{-1}&f(x)\leq \frac{\e^{-\Lambda(x)}}{\tau(x)}\int_0^{x_1}\e^{\Lambda(y)}f(y)\,\d y
+c_1\frac{\e^{-\Lambda(x)}}{\tau(x)}\int_{x_1}^x\e^{\Lambda(y)}y^{-k}\,\d y\\
&\leq \frac{\e^{-\Lambda(x)}}{\tau(x)}\int_0^{x_1}\e^{\Lambda(y)}f(y)\,\d y+\frac{c_1\tau_1}{\lambda+B_0}\frac{\e^{-\Lambda(x)}}{\tau(x)}\int_{x_1}^x\Lambda'(y)\e^{\Lambda_0(y)}y^{1-\gamma_0-k}\,\d y.
\end{align*}
To estimate the last term we use L'H\^opital's rule which gives
\[\int_{x_1}^x\Lambda'(y)\e^{\Lambda(y)}y^{1-\gamma_0-k}\,\d y\sim_{x\to+\infty}
x^{1-\gamma_0-k}\e^{\Lambda(x)}.\]
Finally we get for all $k\geq0$
\[(\mu-\cA_0)^{-1}f(x)=O\Big(\frac{\e^{-\Lambda(x)}+x^{1-\gamma_0-k}}{\tau(x)}\Big)\qquad\text{when}\ x\to+\infty\]
and we deduce that $(\mu-\cA_0)^{-1}f\in \cS_+.$

\medskip
\noindent{\bf Step 2:} \emph{Density of the range.}\\
Define $k_B:=\lfloor\gamma_1\rfloor+2$ and let $h\in\cS_+.$
For $\mu>0$ (large) to be chosen later, set $f_0=(\mu-\cA_0)^{-1}h$ and define the sequence $f_n$ inductively by
\[f_{n+1}=f_0+(\mu-\cA_0)^{-1}\cF_+f_n.\]
Using that $\cF_+$ and $(\mu-\cA_0)^{-1}$ are positive we have $f_0\geq0,$ $f_1-f_0=(\mu-\cA_0)^{-1}\cF_+f_0\geq0,$ and by induction $f_{n+1}-f_n=(\mu-\cA_0)^{-1}\cF_+(f_n-f_{n-1})\geq0.$ 
Hence $f_{n+1}\geq f_n$ pointwise.
Due to the step 1 we also have $(f_n)_{n\in\bN}\subset\cS_+$
and for any $k\in\bN$ and any $n\geq1$ we can integrate the equation
\[x^k(\mu-\cA_0)f_n(x)=x^kh(x)+x^k\cF_+ f_{n-1}(x).\]
on $(0,\infty).$
We get
\begin{align*}
-\int_0^\infty kx^{k-1}\tau f_n+(\mu+\lambda)\int_0^\infty x^kf_n+\int_0^\infty x^kBf_n&=\int_0^\infty x^kh+\wp_k\int_0^\infty x^kBf_{n-1}\\
&\leq \int_0^\infty x^kh+\wp_k\int_0^\infty x^kBf_n,
\end{align*}
which gives
\[(\mu+\lambda)\int_0^\infty x^kf_n\leq\int_0^\infty x^kh+k\int_0^\infty x^{k-1}\tau f_n+(\wp_k-1)\int_0^\infty x^kBf_n.\]
Considering $k=0$ and $k=1$ we obtain
\begin{equation}\label{eq:un-bound}
(\mu+\lambda-\tau_1)\int_0^\infty(1+x)f_n\leq\int_0^\infty(1+x)h+(\wp_0-1)\int_0^\infty Bf_n
\end{equation}
and for $k\geq2,$ since $\wp_k<\wp_1=1,$
\[(\mu+\lambda-k\tau_1)\int_0^\infty x^kf_n\leq\int_0^\infty x^kh+k\tau_1\int_0^\infty x^{k-1}f_n.\]
If $\mu>1+2k_B\tau_1-\lambda$ it yields for any $2\leq k\leq k_B$
\[\int_0^\infty x^kf_n\leq\int_0^\infty x^kh+\int_0^\infty x^{k-1}f_n\]
which gives by induction
\[\int_0^\infty x^{k_B}f_n\leq \sum_{k=2}^{k_B}\int_0^\infty x^kh+\int_0^\infty xf_n\]
and then
\[\int_0^\infty Bf_n\leq B_1\int_0^\infty(1+x^{k_B})f_n\leq B_1\sum_{k=2}^{k_B}\int_0^\infty x^kh+B_1\int_0^\infty(1+x)f_n.\]
Coming back to~\eqref{eq:un-bound} we get
\[(\mu+\lambda-\tau_1-(\wp_0-1)B_1)\int_0^\infty(1+x)f_n\leq
\int_0^\infty(1+x)h+(\wp_0-1)B_1\sum_{k=2}^{k_B}\int_0^\infty x^kh.\]
Finally if we choose $\mu>\max\big(1+2k_B\tau_1-\lambda,\tau_1+(\wp_0-1)B_1-\lambda\big)$ we obtain that $x^kf_n$ is bounded in $L^1_1\simeq L^1(\phi)$ for any $0\leq k\leq k_B-1.$
In particular $f_n$ and $\cF_+f_n$ are bounded in $L^1(\phi).$
By the monotone convergence theorem we may deduce that $f_n\to f_\infty$ and $(\mu-\cA_0-\cF_+)f_n=h+\cF_+(f_{n-1}-f_n)\to h$ in $L^1(\phi)$ when $n\to\infty.$
Hence $f_\infty\in D(\cA)$ and $(\mu-\cA)f_\infty=h.$
Since $\cS=\cS_+-\cS_+$ we may conclude that the range of $\mu-\cA$ is dense in $L^1(\phi),$ and this completes the proof of the generation of $(T_t)_{t\geq0}.$

\medskip
\noindent{\bf Step 3:} \emph{Positivity and conservation.}\\
The positivity of the semigroup follows from the positivity of the resolvent of $\cA,$ which is a consequence of the non-negativity of $f_\infty.$
The conservation property is guaranteed by the identity
\[\langle(\cA_0+\cF_+) f, \phi\rangle=0\]
which is valid for any nonnegative $f\in D(\cA_0).$

\end{proof}

\begin{remark}
A positive contraction semigroup is sometimes called \emph{substochastic semigroup}.
If it additionally satisfies the mass-preservation $\|T_tf\|=\|f\|$ for any $f\geq0$ it is called \emph{stochastic semigroup}.
Notice that the condition \mbox{$\langle(\cA_0+\cF_+) f, \phi\rangle=0$} is not sufficient to guarantee the stochasticity of $(T_t)_{t\geq0}$ in general.
In our case it is true because the semigroup $(T_t)_{t\geq0}$ is generated by the closure of $\cA_0+\cF_+.$
Mention also that the stochasticity of a semigroup is related to the notion of \emph{honesty}.
We refer to~\cite{ALMK,Banasiak,Tyran-Kaminska} for more details on these notions.
\end{remark}

Another useful property of the semigroup $(T_t)_{t\geq0}$ is that, as for the semigroup $(S_t)_{t\geq0},$
if $\supp f\subset[0,R]$ for some $R>0$ then $\supp T_tf\subset[0,X(t,R)].$
This can be seen for instance by means of the Dyson-Phillips expansion.
The perturbed operator $\cA_0+\cF_+$ verifies the assumptions of Kato's theorem (see for instance~\cite{ALMK} for a recent development).
It ensures the existence of an extension of $(\cA_0+\cF_+,D(\cA_0))$ generating a $C_0$-semigroup of contractions which is additionally given by the Dyson-Phillips expansion series.
Since we have proved that the closure of $(\cA_0+\cF_+,D(\cA_0))$ generates a $C_0$-semigroup, the Kato extension is necessarily $(\cA,D(\cA))$ (see~\cite[Proposition 3.8]{Banasiak}) and the Dyson-Phillips series which is strongly convergent in $L^1(\phi)$ reads for any $t\geq0$
\[T_t=\sum_{n=0}^\infty T_t^{(n)},\]
where $T_t^{(0)}=S_t$ and $T_t^{(n+1)}=\int_0^tT_{t-s}^{(n)}\cF_+S_s\,\d s.$
We easily check by induction that if $\supp f\subset[0,R]$ then $\supp T_t^{(n)}f\subset[0,X(t,R)]$ for all $n\in\bN.$
The initialization follows from the explicit formulation of $S_t,$ and the heredity results from the implication $\supp f\subset[0,R]\implies\supp \cF_+f\subset[0,R]$ and the identity $X(t-s,X(s,R))=X(t,R).$

\

We have proved the well-posedness of Equation~\eqref{eq:GF} in $L^1(\phi)\simeq L^1_1.$
Now we consider $\alpha>1$ and we establish that the semigroup $(T_t)_{t\geq0}$ defined in Theorem~\ref{th:gen_Tt} is also a $C_0$-semigroup on $L^1_\alpha.$

\begin{lemma}
\label{lm:boundednesspsi}
For any $\alpha>1$ the space $L^1_\alpha$ is invariant under the semigroup $(T_t)_{t\geq0},$ and there exists $C> 0$ such that for all $t\geq0$
\[\|T_{t}\|_{\scrL(L^1_\alpha)} \leq C\,(1+ t).\]
Additionally $(T_t)_{t\geq0}$ is a $C_0$-semigroup on $L^1_\alpha.$
\end{lemma}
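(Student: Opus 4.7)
The plan is to derive a differential inequality for $\|T_tf\|_{L^1_\alpha}$ along smooth trajectories and then spread the bound to all of $L^1_\alpha$ by density. I work first with a nonnegative $f\in D(\cA_0)_c$: by positivity of $(T_t)_{t\geq 0}$ and the support preservation $\supp T_tf\subset[0,X(t,R)]$ recorded after Theorem~\ref{th:gen_Tt}, $T_tf$ stays nonnegative and compactly supported, so $T_tf\in D(\cA)_c=D(\cA_0)_c$ and $\cA T_tf=\cA_0T_tf+\cF_+T_tf$ throughout. Integrating by parts in the transport term (the boundary contributions vanish since $(\tau T_tf)(0)=0$ and $T_tf$ has compact support) and applying Fubini in the gain term gives
\[\frac{\d}{\d t}\|T_tf\|_{L^1_\alpha}=\int_0^\infty\Phi_\alpha(x)\,T_tf(x)\,\d x,\]
where
\[\Phi_\alpha(x):=\alpha\tau(x)(1+x)^{\alpha-1}-\lambda(1+x)^\alpha+B(x)\biggl(\int_0^1(1+zx)^\alpha\wp(\d z)-(1+x)^\alpha\biggr).\]

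The core technical step is to prove a compact-support-type bound $\Phi_\alpha(x)\leq C_\alpha\mathbf1_{[0,R_\alpha]}(x)$ for suitable $C_\alpha,R_\alpha>0$. Boundedness of $\Phi_\alpha$ on $[0,R_\alpha]$ follows from continuity of $\tau$, $B$ and the bracket, together with the upper bounds $\tau\leq\tau_1$, $B\leq B_1$ on $[0,1]$ from~\eqref{as:tau_infty}--\eqref{as:B_infty}. For the large-$x$ regime I use that $\alpha\mapsto\wp_\alpha$ is strictly decreasing on $(\a,+\infty)$ with $\wp_1=1$, hence $\wp_\alpha<1$ whenever $\alpha>1$; dominated convergence applied to $(1+zx)^\alpha/(1+x)^\alpha\to z^\alpha$, dominated by $1$, then yields
\[\int_0^1(1+zx)^\alpha\wp(\d z)\leq\tfrac{1+\wp_\alpha}{2}(1+x)^\alpha\qquad\text{for $x$ large enough.}\]
Combined with $B(x)\geq B_0 x^{\gamma_0}$ from~\eqref{as:B_infty} and the crude $\alpha\tau(x)(1+x)^{\alpha-1}\leq\alpha\tau_1(1+x)^\alpha$, this leads to
\[\Phi_\alpha(x)\leq\Big[\alpha\tau_1-\lambda-\tfrac{B_0(1-\wp_\alpha)}{2}x^{\gamma_0}\Big](1+x)^\alpha,\]
which is nonpositive past some $R_\alpha$ since $\gamma_0>0$.

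With this cutoff at hand, the differential inequality reduces to
\[\frac{\d}{\d t}\|T_tf\|_{L^1_\alpha}\leq C_\alpha\int_0^{R_\alpha}T_tf\,\d x\leq C_\alpha'\|T_tf\|_{L^1(\phi)}=C_\alpha'\|f\|_{L^1(\phi)}\leq C\|f\|_{L^1_\alpha},\]
using the equivalence $L^1(\phi)\simeq L^1_1\hookrightarrow L^1_\alpha$ of Theorem~\ref{th:recall} and the conservation~\eqref{eq:mass_cons}. Integration in $t$ yields $\|T_tf\|_{L^1_\alpha}\leq(1+Ct)\|f\|_{L^1_\alpha}$ on the dense subset of nonnegative $f\in D(\cA_0)_c$. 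Extension to arbitrary nonnegative $f\in L^1_\alpha$ is obtained by taking nonnegative $f_n\in C^1_c(0,\infty)\subset D(\cA_0)_c$ with $f_n\to f$ in $L^1_\alpha$ (hence also in $L^1(\phi)$): then $T_tf_n\to T_tf$ in $L^1(\phi)$ and, along a subsequence, a.e., and Fatou's lemma transfers both the invariance $T_tf\in L^1_\alpha$ and the linear bound. The signed case follows from $f=f_+-f_-$.

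Strong continuity at $t=0$ on $L^1_\alpha$ is proved similarly: for $f\in C^1_c(0,\infty)$ with $\supp f\subset[0,R]$, the difference $T_tf-f$ is supported in $[0,X(t,R)]$, giving $\|T_tf-f\|_{L^1_\alpha}\leq(1+X(t,R))^{\alpha-1}\|T_tf-f\|_{L^1_1}\to0$ by strong continuity on $L^1(\phi)\simeq L^1_1$; the bound $\|T_t\|_{\scrL(L^1_\alpha)}\leq 1+C$ uniform on $t\in[0,1]$, combined with density of $C^1_c(0,\infty)$ in $L^1_\alpha$, extends strong continuity to all initial data, and the semigroup law transfers from $L^1(\phi)$ by continuity. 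The main obstacle is the cutoff estimate on $\Phi_\alpha$: the gain integral $\int_0^1(1+zx)^\alpha\wp(\d z)$ is dangerously close to $(1+x)^\alpha$, and one has to combine the strict inequality $\wp_\alpha<1$ (afforded by $\alpha>1$) with the super-linear growth of $B$ at infinity to produce a negative contribution powerful enough to beat the linear growth term $\alpha\tau(x)(1+x)^{\alpha-1}$.
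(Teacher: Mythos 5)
Your proposal is correct and follows essentially the same route as the paper: a duality/Lyapunov computation of the generator's action on a power weight, negativity for large $x$ coming from $\wp_\alpha<1$ combined with the unboundedness of $B$, control of the compact region through the conserved $L^1(\phi)$ norm, and a density argument for the extension and the strong continuity. The only differences are cosmetic — you differentiate along trajectories issued from $D(\cA_0)_c$ and use the weight $(1+x)^\alpha$ (hence the extra dominated convergence step), whereas the paper uses the integrated mild-solution identity for compactly supported data and the homogeneous weight $x^\alpha$.
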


\begin{proof}
Let $\alpha>1$ and let $f$ be an integrable function with compact support.
By definition of a mild solution of the abstract Cauchy problem~\eqref{eq:abstract_resc} with initial data $|f|$ we have $\int_0^tT_s|f|\,\d s\in D(\cA)$ for any $t>0$ and
\[T_t|f|=|f|+\cA\int_0^t T_s|f|\,\d s.\]
Additionally, due to what we explained just before the lemma, the integral $\int_0^tT_s|f|\,\d s$ has a compact support so it belongs to $D(\cA)_c=D(\cA_0).$
By integration against $x^\alpha$ it follows
\[\langle T_{t} |f|,x^\alpha\rangle= \langle |f|,x^\alpha\rangle+\int_0^t\langle T_s |f|,(\cA_0^*+\cF_+^*)x^\alpha \rangle\,\d s.\]
Since $\wp_\alpha<1$ and because of the assumptions on $\tau$ and $B,$
there exists $R>0$ such that
\[\forall x\geq R,\qquad \alpha\tau(x)/x-\lambda+(\wp_\alpha-1)B(x)\leq0.\]
This ensures that for all $t\geq0$
\begin{align*}
\langle T_t |f|,(\cA_0^*+\cF_+^*)x^\alpha \rangle
&= \int_0^\infty T_t|f|(x)\bigl[\alpha\tau(x)/x-\lambda+(\wp_\alpha-1)B(x)\bigr]x^\alpha\,\d x\\
&\leq \int_0^R T_t |f|(x) \bigl[\alpha\tau(x)/x-\lambda+(\wp_\alpha-1)B(x)\bigr]x^\alpha\,\d x\\
&\lesssim \int_0^\infty T_t|f|(x)\phi(x)\,\d x= \|f\|_{L^1(\phi)}\lesssim \|f\|_{L^1_\alpha}.
\end{align*}
Using that $|T_tf|\leq T_t|f|$ by positivity of $T_t,$ we deduce that
\[\langle |T_{t} f|,x^\alpha\rangle\leq\langle T_{t} |f|,x^\alpha\rangle\lesssim\langle |f|,x^\alpha\rangle+t\|f\|_{L^1_\alpha}\leq(1+t)\|f\|_{L^1_\alpha}\]
and then, since $(1+x)^\alpha\lesssim\phi(x)+x^\alpha$ and $T_t$ is a contraction in $L^1(\phi),$
\[\|T_tf\|_{L^1_\alpha}\lesssim\|T_tf\|_{L^1(\phi)}+\langle |T_{t} f|,x^\alpha\rangle\lesssim(1+t)\|f\|_{L^1_\alpha}.\]
We conclude with the density of the compactly supported functions in $L^1_\alpha.$

\medskip

It remains to prove the strong continuity of $(T_t)_{t\geq0}$ in $L^1_\alpha.$
The convergence $T_tf\to f$ in $L^1_\alpha$ readily follows from the convergence in $L^1(\phi)$ if $f$ is compactly supported.
Then it can be extended to any $f\in L^1_\alpha$ by a density argument.

\end{proof}

Now we establish, through a duality argument, a new result of creation of moments which will be the key argument for obtaining the asynchronous exponential growth.

\begin{lemma}
\label{lm:moments_creation_T}
For all $t>0$ and all $\beta>\alpha>1,$ $T_t$ is a bounded linear operator from $L^1_\alpha$ into $L^1_\beta.$
More precisely for all $\delta<\alpha$ there exist two positive constants $a=a(\alpha,\beta)$ and $C=C(\alpha,\beta,\delta)$ such that for all $f\in L^1_\alpha$ and all $t>0$
\[\|T_tf\|_{L^1_\beta}\leq Ct^{-(\beta-\delta)/\gamma_0}\e^{at}\|f\|_{L^1_\alpha}.\]
\end{lemma}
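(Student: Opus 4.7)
The plan is to reduce, via duality, the creation of moments for $T_t$ to a pointwise bound on the dual semigroup $T_t^*$, which is then established through a Dyson--Phillips expansion starting from the explicit estimate available for $S_t^*$. Since $T_t$ is positive and $|T_tf|\leq T_t|f|$, it suffices to treat $f\geq 0$. Writing $w_\gamma(x):=(1+x)^\gamma$, I pair the primal with the dual semigroup:
\[\|T_tf\|_{L^1_\beta} = \int_0^\infty T_tf(x)\,w_\beta(x)\,\d x = \int_0^\infty f(x)\,T_t^*w_\beta(x)\,\d x.\]
If one can establish the pointwise bound $T_t^*w_\beta(x)\leq C\,t^{-(\beta-\delta)/\gamma_0}\e^{at}\,w_\delta(x)$, then this integral is controlled by $C\,t^{-(\beta-\delta)/\gamma_0}\e^{at}\|f\|_{L^1_\delta}$, and $\delta\leq\alpha$ gives $\|f\|_{L^1_\delta}\leq\|f\|_{L^1_\alpha}$, yielding the claim.

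Next I would bound $S_t^*w_\beta$ pointwise. Dualizing the explicit formula of Proposition~\ref{prop:explicit_St},
\[S_t^*w(x) = w(X(t,x))\,\e^{-\lambda t-\int_0^tB(X(s,x))\,\d s}.\]
Combining $w_\beta(X(t,x))\leq w_\beta(x)\e^{\beta\tau_1 t}$ (from~\eqref{eq:Xbound}) with $B(X(s,x))\geq B_0 x^{\gamma_0}$ for $x\geq x_0$ (from~\eqref{as:B_infty} and $X(s,x)\geq x$) gives
\[S_t^*w_\beta(x)\leq w_\beta(x)\,\e^{(\beta\tau_1-\lambda)t}\,\e^{-B_0 x^{\gamma_0}t}\qquad(x\geq x_0).\]
The elementary inequality $\sup_{y\geq 0}(1+y)^{\beta-\delta}\e^{-B_0 y^{\gamma_0}t}\lesssim t^{-(\beta-\delta)/\gamma_0}$ (valid for $\delta<\beta$), together with a trivial bound on the compact region $x<x_0$, then provides the desired pointwise estimate for $S_t^*$.

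To transfer the bound from $S_t^*$ to $T_t^*$, I would use the Dyson--Phillips series $T_t=\sum_n T_t^{(n)}$ recalled after Theorem~\ref{th:gen_Tt}. Dualizing yields $T_t^*=\sum_n(T_t^{(n)})^*$ where each $(T_t^{(n)})^*$ is an $n$-fold time integral alternating $S^*$-factors with $\cF_+^*$-factors. Since $\cF_+^*w_\gamma(x) = B(x)\int_0^1(1+zx)^\gamma\wp(\d z)\lesssim(1+x)^{\gamma+\gamma_1}$, each $\cF_+^*$ raises the weight exponent by $\gamma_1$, while each $S^*$ (by the previous step) lowers it at the cost of a time-singularity $s^{-\cdot/\gamma_0}$. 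Choosing a chain of intermediate exponents interpolating between $\delta$ and $\beta$ with increments larger than $\gamma_1$ so that all resulting time integrals remain convergent, and summing the Beta-type expressions thereby obtained, yields the announced bound.

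The main obstacle is this last step, where the iteration must balance the spatial weight boost from $\cF_+^*$ against the time-singularity integrability from $S^*$. This balance explains both the restriction $\delta<\alpha$, needed to leave room for the interpolation chain to avoid the borderline regime, and the blow-up of the constant $C=C(\alpha,\beta,\delta)$ as $\delta\uparrow\alpha$; the rate $a=a(\alpha,\beta)$ is independent of $\delta$ because the exponential rate along the chain is uniform for intermediate exponents bounded by $\beta$.
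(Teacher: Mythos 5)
Your first two steps (reducing to a pointwise bound on $T_t^*w_\beta$ by positivity and duality, and the explicit estimate $S_t^*w_\beta(x)\lesssim \e^{(\beta\tau_1-\lambda)t}\e^{-B_0x^{\gamma_0}t}w_\beta(x)$ for $x\geq x_0$) are sound and parallel what the paper does (the paper's Grönwall bound is precisely read as an estimate on $T_t^*\psi$, and your $S_t^*$ bound is the dual form of Lemma~\ref{lm:moments_creation_S}). The genuine gap is in the transfer from $S_t^*$ to $T_t^*$ via the Dyson--Phillips series. In the $n$-th term the composite $S^*_{u_0}\cF_+^*S^*_{u_1}\cdots\cF_+^*S^*_{u_n}$ must bring the exponent from $\beta$ down to $\delta$, but every $\cF_+^*$ raises the exponent by $\gamma_1$, while an integrated $S^*$-factor can only absorb a drop \emph{strictly less than} $\gamma_0$ if its singularity $u_i^{-\theta_i/\gamma_0}$ is to be integrable on the time simplex. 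Under {\bf (H$B$)} one always has $\gamma_1\geq\gamma_0$, so each $\cF_+^*$--$S^*$ pair produces a net ascent of at least $\gamma_1-\gamma_0\geq0$: the chain of intermediate exponents cannot descend to $\delta$ (already for $n=1$ one needs $\beta-\delta<2\gamma_0-\gamma_1$, and for large $n$ the bookkeeping fails outright whenever $\gamma_1>\gamma_0$). Your stated recipe, increments ``larger than $\gamma_1$'', goes in the wrong direction: convergence of the time integrals forces each per-factor descent to be smaller than $\gamma_0$, which is exactly what conflicts with the $\gamma_1$-ascent. Assigning the bulk of the descent to the largest time increment (which is $\geq t/(n+1)$) does not rescue the scheme either: the required descent grows like $n(\gamma_1-\gamma_0)$, so the resulting constants grow super-exponentially in $n$ and carry powers of $t^{-1}$ increasing with $n$, destroying both the summability of the series and the claimed rate $t^{-(\beta-\delta)/\gamma_0}$.

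The paper's proof avoids any iteration. It establishes a family of drift (Lyapunov) inequalities on the dual weights, $(\cA_0^*+\cF_+^*)\psi\leq -c_\beta R^{\gamma_0}\psi+C_\beta R^{\gamma_0+\beta-\delta}\varphi$ with $\psi(x)=1+x^\beta$, $\varphi(x)=1+x^\delta$, valid for every $R\geq R_\beta$; it integrates this along the semigroup by Grönwall, using the polynomial bound $\langle T_tf,\varphi\rangle\lesssim(1+t)\langle f,\varphi\rangle$ from Lemma~\ref{lm:boundednesspsi}, to get $T_t^*\psi\leq\e^{-c_\beta R^{\gamma_0}t}\psi+C R^{\beta-\delta}(1+t)\varphi$; and the rate $t^{-(\beta-\delta)/\gamma_0}$ then comes from optimizing the free parameter $R$ as a function of $x$ and $t$ (taking $R^{\gamma_0}\sim\log x/t$ for large $x$, $R=R_\beta$ otherwise). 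This optimization-over-$R$ idea, which substitutes for the damping you try to extract factor by factor from the expansion, is the missing ingredient in your proposal; note also that it is exactly because this lemma is proved without Duhamel that the paper can afterwards use the Duhamel formula of Lemma~\ref{lm:Duhamel} with only a single $\cF_+$ per time integral, where the constraint $\alpha>\max(1,\a+\gamma_1-\gamma_0)$ suffices to keep the singularities integrable.
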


\begin{proof}
Fix $\beta>\alpha>\delta>1$ and denote $\psi(x)=1+x^\beta$ and $\varphi(x)=1+x^\delta.$
We have
\begin{align*}
(\cA_0^*+\cF_+^*)\psi(x)&=\beta\tau(x)x^{\beta-1}-\lambda\psi(x)+(\wp_0-1)B(x)+(\wp^{}_{\!\beta}-1)B(x)x^\beta\\
&\leq(\beta\tau_1-\lambda)\psi(x)+\big((\wp_0-1)-(1-\wp^{}_{\!\beta})x^\beta\big)B(x).
\end{align*}
Setting $c_\beta:=\frac12(1-\wp^{}_{\!\beta})B_0$ we can find $R_\beta\geq\max(1,x_0)$ such that for all $R\geq R_\beta,$ $(\wp_0-1)-(1-\wp^{}_{\!\beta})R^\beta\leq0$ and $\alpha\tau_1-\lambda-(1-\wp^{}_{\!\beta})B_0R^{\gamma_0}\leq -c_\beta R^{\gamma_0}.$
Choose such a $R_\beta$ and consider $R\geq R_\beta.$
For $x\geq R$ we have $(\wp_0-1)-(1-\wp^{}_{\!\beta})x^\beta\leq0$ and then
\begin{align*}
(\cA_0^*+\cF_+^*)\psi(x)&\leq(\beta\tau_1-\lambda)\psi(x)+\big((\wp_0-1)-(1-\wp^{}_{\!\beta})x^\beta\big)B_0R^{\gamma_0}\\
&=(\beta\tau_1-\lambda-(1-\wp^{}_{\!\beta})B_0R^{\gamma_0})\psi(x)+(\wp_0-\wp^{}_{\!\beta})B_0R^{\gamma_0}\\
&\leq -c_\beta R^{\gamma_0}\psi(x)+\frac{(\wp_0-\wp^{}_{\!\beta})B_0}{\varphi(R_\beta)}R^{\gamma_0}\varphi(x).
\end{align*}
For $x\leq R$ we have, since $(\wp_0-1)-(1-\wp^{}_{\!\beta})x^\beta\leq0$ for $x$ between $R_\beta$ and $R,$
\begin{align*}
(\cA_0^*+\cF_+^*)&\psi(x)\leq\beta\tau_1\psi(x)+(\wp_0-1)B_1R_\beta^{\gamma_1}\\
&\hspace{4mm} = -c_\beta R^{\gamma_0}\psi(x)+(\beta\tau_1+c_\beta R^{\gamma_0})\psi(x)+(\wp_0-1)B_1R_\beta^{\gamma_1}\\
&\hspace{2mm}\leq -c_\beta R^{\gamma_0}\psi(x)+\Big[(\beta\tau_1+c_\beta R^{\gamma_0})\frac{\psi(x)}{\varphi(x)}+(\wp_0-1)B_1R_\beta^{\gamma_1}\Big]\varphi(x)\\
&\leq -c_\beta R^{\gamma_0}\psi(x)+\Big[2(\beta\tau_1+c_\beta R^{\gamma_0})(1+R)^{\beta-\delta}+(\wp_0-1)B_1R_\beta^{\gamma_1}\Big]\varphi(x).
\end{align*}
Finally there exists a constant $C_\beta>0,$ independent of $R,$ such that for all $x\geq0$
\[(\cA_0^*+\cF_+^*)\psi(x)\leq -c_\beta R^{\gamma_0}\psi(x)+C_\beta R^{\gamma_0+\beta-\delta}\varphi(x).\]
Let $f\in D(\cA)_c$ be nonnegative.
Injecting the above inequality in
\[\frac{\d}{\d t}\langle T_{t} f,\psi\rangle= \langle \cA T_t f,\psi\rangle=\langle T_t f,(\cA_0^*+\cF_+^*)\psi\rangle\]
we get for all $R\geq R_\beta$
\[\frac{\d}{\d t}\langle T_{t} f,\psi\rangle\leq C_\beta R^{\gamma_0+\beta-\delta}\langle T_tf,\varphi\rangle -c_\beta R^{\gamma_0}\langle T_t f,\psi\rangle.\]
Lemma~\ref{lm:boundednesspsi} providing the existence of $C_\delta>0$ such that $\langle T_tf,\varphi\rangle\leq C_\delta(1+t)\langle f,\varphi\rangle,$ we deduce by a Grönwall type argument that
\[\langle T_tf,\psi\rangle\leq \e^{-c_\beta R^{\gamma_0}t}\langle f,\psi\rangle+\frac{C_\beta C_\delta}{c_\beta}R^{\beta-\delta}(1+t)\langle f,\varphi\rangle.\]
Since this inequality is valid for all nonnegative $f\in D(\cA)_c,$ it is equivalent to say that for all $t,x\geq0$ and all $R\geq R_\alpha$
\begin{equation}\label{T*psi}
T_t^*\psi(x)\leq \e^{-c_\beta R^{\gamma_0}t}\psi(x)+\frac{C_\beta C_\delta}{c_\beta}R^{\beta-\delta}(1+t)\varphi(x),
\end{equation}
where $T_t^*$ is the dual operator of $T_t,$ which acts on the dual space $L^1(\psi)'=L^\infty(\psi):=\{\varphi:(0,\infty)\to\R\ \text{measurable},\ |\varphi|/\psi\ \text{is essentially bounded on}\ (0,\infty)\}.$
Considering $R=\big(\frac{\beta-\alpha}{c_\beta}\frac{\log x}{t}\big)^{1/\gamma_0}$ we get that for $x\geq \exp\big(\frac{c_\beta}{\beta-\alpha}R_\beta^{\gamma_0}t\big)$
\begin{align*}
T_t^*\psi(x)&\leq x^{\alpha-\beta}\psi(x)+\frac{C_\beta C_\delta}{c_\beta}\Big(\frac{\beta-\alpha}{c_\beta}\Big)^{1/\gamma_0}t^{(\delta-\beta)/\gamma_0}(\log x)^{(\beta-\delta)/\gamma_0}\varphi(x)\\
&\leq C_{\alpha,\beta,\delta}(1+t^{(\delta-\beta)/\gamma_0})(1+x)^\alpha,
\end{align*}
where $C_{\alpha,\beta,\delta}$ is a positive constant independent of $t$ and $x.$
For $x< \exp\big(\frac{c_\beta}{\beta-\alpha}R_\beta^{\gamma_0}t\big)$ we use~\eqref{T*psi} with $R=R_\beta$ to get
\begin{align*}
T_t^*\psi(x)&\leq\e^{-c_\beta R_\beta^{\gamma_0}t}\psi\big(\e^{\frac{c_\beta}{\beta-\alpha}R_\beta^{\gamma_0}t}\big)+\frac{C_\beta C_\delta}{c_\beta}R_\beta^{\beta-\alpha}\varphi\big(\e^{\frac{c_\beta}{\beta-\alpha}R_\beta^{\gamma_0}t}\big)\\
&\leq 1+\e^{\frac{\alpha}{\beta-\alpha}c_\beta R_\beta^{\gamma_0}t}+\frac{C_\beta C_\delta}{c_\beta}R_\beta^{\beta-\alpha}\big(1+\e^{\frac{\alpha}{\beta-\alpha}c_\beta R_\beta^{\gamma_0}t}\big)\\
&\leq\Big(1+\frac{C_\beta C_\delta}{c_\beta}R_\beta^{\beta-\alpha}\Big)\big(1+\e^{\frac{\alpha}{\beta-\alpha}c_\beta R_\beta^{\gamma_0}t}\big)(1+x)^\alpha.
\end{align*}
Finally there exist two positive constants $a=a(\alpha,\beta)$ and $C=C(\alpha,\beta,\delta)$ such that for all $t>0$ and all $x\geq0$
\[T_t^*\psi(x)\leq Ct^{(\delta-\beta)/\gamma_0}\e^{at}(1+x)^\alpha\]
and as a consequence for all $f\in L^1_\beta$
\[\|T_tf\|_{L^1_\beta}\lesssim\|T_tf\|_{L^1(\psi)}\leq\langle T_t|f|,\psi\rangle=\langle |f|,T_t^*\psi\rangle\leq Ct^{(\delta-\beta)/\gamma_0}\e^{at}\|f\|_{L^1_\alpha}.\]
Then we may extend this inequality to all $f\in L^1_\alpha$ by a truncation argument.
\end{proof}

In addition to the Dyson-Phillips expansion, Kato's theorem guarantees the validity of the Duhamel formula
\[T_tf=S_tf+\int_0^tT_{t-s}\cF_+S_sf\,\d s\]
for all $t\geq0$ and $f\in D(\cA_0).$
Such an equation proves very useful for investigating the long time behavior of the semigroup $(T_t)_{t\geq0}.$
However in our study it is better to use a slightly different one, given in Lemma~\ref{lm:Duhamel} below.
The reason is that the property of creation of moments $S_t(L^1_\alpha)\subset L^1_\beta$ $(t>0,\beta>\alpha)$ is valid for any $\alpha\in\R,$
while in the proof of $T_t(L^1_\alpha)\subset L^1_\beta$ we need that $\alpha>1.$

\begin{lemma}\label{lm:Duhamel}
Let $\alpha>\max(1,\a+\gamma_1-\gamma_0).$
For any $t>0$ the integral
$\int_0^tS_{t-s}\cF_+T_s\,\d s$
defines a bounded linear operator on $L^1_\alpha.$
Moreover the following Duhamel formula holds in $\scrL(L^1_\alpha):$
\[T_t=S_t+\int_0^tS_{t-s}\cF_+T_s\,\d s,\qquad t\geq0.\]
\end{lemma}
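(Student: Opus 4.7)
The plan is to first establish that $V_t:=\int_0^t S_{t-s}\cF_+T_s\,\d s$ is a bounded operator on $L^1_\alpha$, then verify the Duhamel identity on a dense subspace where differentiation is rigorously justified, and conclude by density.

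\textbf{Boundedness of the integral.} The hypothesis $\alpha>\a+\gamma_1-\gamma_0$ ensures that the open interval $(\a,\alpha+\gamma_0-\gamma_1)$ is nonempty; fix some $\beta$ in it. I combine three estimates: Lemma~\ref{lm:F+def}, together with the pointwise bound $(1+B(x))(1+x)^\beta\lesssim(1+x)^{\beta+\gamma_1}$, gives $\|\cF_+g\|_{L^1_\beta}\lesssim\|g\|_{L^1_{\beta+\gamma_1}}$; Lemma~\ref{lm:moments_creation_T} applied with some $\delta\in(\beta+\gamma_1-\gamma_0,\alpha)$ (nonempty since $\beta<\alpha+\gamma_0-\gamma_1$) gives $\|T_sf\|_{L^1_{\beta+\gamma_1}}\lesssim s^{-\eta}e^{as}\|f\|_{L^1_\alpha}$ with some exponent $\eta<1$; and Lemma~\ref{lm:moments_creation_S} gives $\|S_{t-s}h\|_{L^1_\alpha}\lesssim (t-s)^{-(\alpha-\beta)/\gamma_0}e^{\alpha\tau_1(t-s)}\|h\|_{L^1_\beta}$. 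The combined bound has singularities at $s=0$ and $s=t$ that in general are not simultaneously integrable (the obstruction is $\gamma_1\geq 2\gamma_0$). I bypass this by splitting $\int_0^t=\int_0^{t/2}+\int_{t/2}^t$: on $[0,t/2]$ the $(t-s)$-factor is uniformly bounded in a constant depending on $t$ and only the integrable $s^{-\eta}$-singularity remains; on $[t/2,t]$ I re-estimate with $\beta':=\alpha$, so $S_{t-s}:L^1_\alpha\to L^1_\alpha$ has norm $\lesssim e^{\alpha\tau_1(t-s)}$ while Lemma~\ref{lm:moments_creation_T} still provides a $t$-dependent uniform bound on $\|T_sf\|_{L^1_{\alpha+\gamma_1}}$ for $s\geq t/2$. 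Both pieces yield domination of the integrand's $L^1_\alpha$-norm by an integrable function times $\|f\|_{L^1_\alpha}$, so the Bochner integral exists in $L^1_\alpha$ and $V_t$ is bounded on $L^1_\alpha$.

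\textbf{Duhamel identity on a dense subspace.} The subspace $D(\cA_0)_c$ is dense in $L^1_\alpha$ since $C^1_c(0,\infty)\subset D(\cA_0)_c$. For $f\in D(\cA_0)_c$ with $\supp f\subset[0,R]$, the support-propagation property recorded after Theorem~\ref{th:gen_Tt} yields $\supp T_sf\subset[0,X(s,R)]$, so $T_sf\in D(\cA)_c=D(\cA_0)_c\subset D(\cA_0)$. Using the identity $\cA=\cA_0+\cF_+$ on $D(\cA_0)_c$ and the commutation $\cA_0 S_{t-s}=S_{t-s}\cA_0$ on $D(\cA_0)$, the map $s\mapsto S_{t-s}T_sf$ is differentiable in $L^1_\alpha$ with
\[\frac{\d}{\d s}\bigl(S_{t-s}T_sf\bigr)=-\cA_0 S_{t-s}T_sf+S_{t-s}\cA T_sf=S_{t-s}\cF_+T_sf,\]
and integrating from $s=0$ to $s=t$ gives $T_tf-S_tf=\int_0^t S_{t-s}\cF_+T_sf\,\d s$.

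\textbf{Conclusion.} The bounded operators $V_t$ and $T_t-S_t$ agree on the dense subspace $D(\cA_0)_c$ of $L^1_\alpha$, hence coincide in $\scrL(L^1_\alpha)$. The main obstacle is the singularity analysis in the first step: a single-$\beta$ estimate would generally require $\gamma_1<2\gamma_0$, and the dyadic splitting at $t/2$ is the key device that replaces each non-integrable singularity by a uniform bound on the complementary subinterval, making the sharp condition $\alpha>\a+\gamma_1-\gamma_0$ suffice.
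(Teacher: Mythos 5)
Your proof is correct and follows essentially the same route as the paper: the same choice of intermediate exponents $\beta\in(\a,\alpha+\gamma_0-\gamma_1)$ and $\delta\in(\beta+\gamma_1-\gamma_0,\alpha)$, the same splitting of the integral at $t/2$ (estimating through $L^1_\beta$ near $s=0$ and through $L^1_\alpha$ near $s=t$), and the same verification of the Duhamel identity by differentiating $s\mapsto S_{t-s}T_sf$ on the dense, semigroup-invariant subspace $D(\cA)_c=D(\cA_0)_c$.
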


\begin{proof}
Fix $t>0,$ $\alpha>\max(1,\a+\gamma_1-\gamma_0),$ $\beta\in(\a,\alpha+\gamma_0-\gamma_1),$ and $\delta\in(\beta+\gamma_1-\gamma_0,\alpha).$
We use Lemmas~\ref{lm:F+def},~\ref{lm:moments_creation_S} and~\ref{lm:moments_creation_T} to get, uniformly in $s\in(0,t/2)$ and $f\in L^1_\alpha,$
\[\|S_{t-s}\cF_+T_sf\|_{L^1_\alpha}\lesssim\|\cF_+T_sf\|_{L^1_\beta}\lesssim\|T_sf\|_{L^1_{\beta+\gamma_1}}\lesssim s^{-(\beta+\gamma_1-\delta)/\gamma_0}\|f\|_{L^1_\alpha}.\]
Using Lemmas~\ref{lm:F+def} and~\ref{lm:moments_creation_T} we have uniformly in $s\in(t/2,t)$
\[\|S_{t-s}\cF_+T_sf\|_{L^1_\alpha}\leq\|\cF_+T_sf\|_{L^1_\alpha}\lesssim\|T_sf\|_{L^1_{\alpha+\gamma_1}}\lesssim \|f\|_{L^1_\alpha}.\]
Since $\delta>\beta+\gamma_1-\gamma_0$ we deduce that for any $f\in L^1_\alpha$ the function $s\mapsto\|S_{t-s}\cF_+T_sf\|_{L^1_\alpha}$ is integrable on $(0,t)$ and it ensures that the function $s\mapsto S_{t-s}\cF_+T_sf$ is (Bochner) integrable on $(0,t).$
We even proved that $s\mapsto\|S_{t-s}\cF_+T_s\|_{\scrL(L^1_\alpha)}$ is integrable on $(0,t)$ which, together with the triangular inequality $\|\int_0^tS_{t-s}\cF_+T_sf\,\d s\|_{L^1_\alpha}\leq\int_0^t\|S_{t-s}\cF_+T_sf\|_{L^1_\alpha}\,\d s,$ guarantees that the linear mapping
$f\mapsto \int_0^tS_{t-s}\cF_+T_sf\,\d s$ is a bounded operator on $L^1_\alpha.$

As a consequence it suffices to verify the Duhamel formula on a dense subspace of $L^1_\alpha.$
We use $D(\cA)_c=D(\cA_0)_c$ which is invariant under both semigroups $(S_t)_{t\geq0}$ and $(T_t)_{t\geq0}.$
For any $f\in D(\cA)_c=D(\cA_0)_c$ and $t>0$ we have
\begin{align*}
\frac{\d}{\d s}(S_{t-s}T_sf)&=-\cA_0S_{t-s}T_sf+S_{t-s}\cA T_sf\\
&=-\cA_0S_{t-s}T_sf+S_{t-s}(\cA_0+\cF_+) T_sf=S_{t-s}\cF_+T_sf.
\end{align*}
An integration between $0$ and $t$ yields the result.
\end{proof}

\section{Asymptotic behavior}
\label{sec:asymptotic}

\subsection{The essential spectrum}\label{ssec:essential_spectrum}

Recall that for a closed linear operator $A$ in a Banach space $\mathfrak X,$ the spectrum is defined by $\sigma(A):=\{\lambda\in\bC: A-\lambda \text{ is not bijective}\}$
and the spectral bound as $s(A):=\sup\{\Re\lambda: \lambda\in\sigma(A)\}.$
If $A$ is bounded, then the spectral radius $r(A):=\sup\{|\lambda|: \lambda\in\sigma(A)\}$ satisfies $r(A)\leq\|A\|_{\scrL(\mathfrak X)}.$
The operator $A-\lambda$ can be non bijective for various reasons and it is useful to define some subsets of the spectrum.
A notion which will play a key role in the proof of our main theorem is the essential spectrum.
There are several definitions of essential spectrum in the literature (see~\cite{GustafsonWeidmann}).
We will use the two following ones:
\[\sigma_{e1}(A):=\big\{\lambda\in\sigma(A): \rg(A-\lambda) \text{ is not closed or } \Ker(A-\lambda) \text{ is infinite dimensional}\big\}\]
and
\begin{align*}
\sigma_{e2}(A):=\big\{\lambda\in\sigma(A): & \rg(A-\lambda) \text{ is not closed, } \lambda \text{ is a limit point of } \sigma(A),\\
& \text{ or } \bigcup_{r\geq0}\ker\big((A-\lambda)^r\big) \text{ is infinite dimensional}\big\}.
\end{align*}
Accordingly we define the essential spectral radii $r_{ek}(T)=\sup\{|\lambda|: \lambda\in\sigma_{ek}(T)\}$ for $k=1,2.$

\medskip

The second definition is the one introduced by Browder in~\cite{Browder} and used by Webb in~\cite{Webb87} where an abstract theorem of asynchronous exponential growth is proved.
We will use the following statement which is readily deduced from Propositions~2.2, 2.3, 2.5 and Remarks 2.1 and 2.2 in~\cite{Webb87} (see also Corollary 4.2 in~\cite{MagalRuan}).

\begin{theorem}[\cite{Webb87}]
\label{th:abstraitissimo}
Let $(U_t)_{t\geq0}$ be a positive $C_0$-semigroup with infinitesimal generator $A$ in a Banach lattice $\mathfrak X.$
Assume that $r_{e2}(U_t)<r(U_t)$ for some (hence all) $t>0,$ and that there exists a strictly positive $\varphi\in\mathfrak X'$ such that for all $f\in\mathfrak X,$ $\langle \e^{-s(A)t}U_tf,\varphi\rangle$ is bounded in $t.$
Then there exists a positive finite rank operator $P$ in $\mathfrak X$ and two constants $M,\sigma>0$ such that $\|\e^{-s(A)t}U_t-P\|_{\scrL(\mathfrak X)}\leq Me^{-\sigma t}.$
\end{theorem}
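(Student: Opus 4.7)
My strategy is to reduce the problem via quasi-compactness to a finite-dimensional spectral analysis on an invariant subspace, and then to exploit positivity together with the boundedness condition (through the strictly positive test functional $\varphi$) in order to show that this finite-dimensional part collapses to the identity on its range, so that the associated spectral projection plays the role of the asymptotic profile $P$.

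First I rescale: set $V_t := \mathrm{e}^{-s(A)t} U_t$, with generator $B := A - s(A)I$. Then $(V_t)_{t\geq 0}$ is still a positive $C_0$-semigroup on $\mathfrak X$, with $r(V_1) = 1$ and $r_{e2}(V_1) < 1$, and the boundedness hypothesis combined with the uniform boundedness principle gives $\sup_{t\geq 0} \|V_t^* \varphi\|_{\mathfrak X'} < \infty$. By the very definition of $\sigma_{e2}$, every $\lambda \in \sigma(V_1)$ with $|\lambda| > r_{e2}(V_1)$ is an isolated eigenvalue with finite algebraic multiplicity, so the peripheral spectrum $\sigma_\pi := \sigma(V_1) \cap \{|z|=1\}$ is finite. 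Let $P$ denote the associated Riesz spectral projection: it has finite rank, commutes with every $V_t$, and yields an invariant splitting $\mathfrak X = \mathfrak X_1 \oplus \mathfrak X_2$ with $\mathfrak X_1 := P\mathfrak X$ finite-dimensional and $r(V_1|_{\mathfrak X_2}) =: \rho < 1$. Gelfand's formula together with the strong continuity of the semigroup then yields $\|V_t (I-P)\|_{\scrL(\mathfrak X)} \leq M_0\, \mathrm{e}^{-\sigma t}$ for some $M_0, \sigma > 0$.

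The theorem then reduces to showing $V_t P = P$ for every $t\geq 0$, equivalently $B_1 := B|_{\mathfrak X_1} = 0$. Since $\mathfrak X_1$ is finite-dimensional with $\sigma(V_1|_{\mathfrak X_1}) = \sigma_\pi \subset \{|z|=1\}$, the eigenvalues of $B_1$ are purely imaginary, and I must rule out (i) any nontrivial Jordan block of $B_1$ and (ii) any purely imaginary eigenvalue of $B_1$ other than $0$. For (i), a Jordan block would produce an $h \in \mathfrak X_1$ with $\|V_t h\|$ growing polynomially in $t$; combining the positivity inequality $|V_t h| \leq V_t|h|$ with the fact that $\varphi > 0$ separates nonzero positive elements, I would exhibit a $g \in \mathfrak X$ for which $\langle V_t g, \varphi\rangle$ is unbounded, contradicting the hypothesis. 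For (ii) I invoke the classical cyclic structure of the peripheral spectrum of a positive operator on a Banach lattice (Schaefer--Greiner): if $\mathrm{e}^{i\theta} \in \sigma_\pi$ then $\mathrm{e}^{ik\theta} \in \sigma_\pi$ for every integer $k$, forcing $\sigma_\pi$ to be a group of $n$-th roots of unity. An averaging argument over this finite group, together with positivity of the Riesz projection at $1 = r(V_1)$ (Krein--Rutman) and the strictly positive $\varphi$, isolates the component at $1$ and forces the others to vanish, leaving $\sigma_\pi = \{1\}$.

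Once $B_1 = 0$, one has $V_t P = P$ for all $t\geq 0$, hence $V_t - P = V_t(I-P)$, and the decay estimate above yields $\|V_t - P\|_{\scrL(\mathfrak X)} \leq M_0\,\mathrm{e}^{-\sigma t}$. Positivity of $P$ follows from the fact that it is the Riesz projection of a positive operator at its spectral radius. Undoing the rescaling gives the claim for $(U_t)$. The main difficulty lies entirely in the previous paragraph: the Perron--Frobenius spectral analysis on $\mathfrak X_1$ is where the three hypotheses (positivity of $(U_t)$, boundedness of $\langle V_t f, \varphi\rangle$ in $t$, strict positivity of $\varphi$) must interact in a delicate way --- exactly what Propositions~2.2,~2.3 and~2.5 of Webb's paper handle.
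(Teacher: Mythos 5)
The paper does not actually prove this statement --- it is quoted from Webb's paper --- so your sketch has to stand on its own. Its architecture is the standard and correct one: rescale, take the Riesz projection $P$ onto the peripheral spectral subspace $\mathfrak X_1$ of $V_1$ (finite rank by quasi-compactness), get exponential decay of $V_t(I-P)$ from $r(V_1|_{(I-P)\mathfrak X})<1$, and reduce everything to showing $B|_{\mathfrak X_1}=0$. Step (i) is essentially right once you make explicit that $g\mapsto\langle|g|,\varphi\rangle$ is a norm on the finite-dimensional space $\mathfrak X_1$ (strict positivity of $\varphi$ gives definiteness), hence equivalent to $\|\cdot\|$ there, so that polynomial growth of $\|V_th\|$ yields $\langle V_t|h|,\varphi\rangle\geq\langle|V_th|,\varphi\rangle\to\infty$, contradicting the hypothesis with $f=|h|$. (Two smaller caveats: the identity $r(V_1)=1$, i.e.\ $s(A)=\omega_0(A)$, is not automatic on a general Banach lattice and itself needs quasi-compactness plus the spectral mapping theorem for the point spectrum; and the single-operator cyclicity theorem you invoke requires a resolvent growth condition near the peripheral spectrum, so it must be applied \emph{after} step (i) has excluded higher-order poles.)

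The genuine gap is step (ii). Every ingredient you list there --- positivity and power-boundedness of $V_1$, quasi-compactness, cyclicity of the peripheral spectrum, a strictly positive $\varphi$ with $\langle V_1^kf,\varphi\rangle$ bounded, positivity of the Riesz projection at $1$ --- is satisfied by the cyclic permutation operator $C$ on $\bC^n\simeq L^1(\{1,\dots,n\})$ with $\varphi=\mathbf 1$, whose peripheral spectrum is the full group of $n$-th roots of unity. So no ``averaging argument over the finite group'' using only these facts about the single operator $V_1$ can conclude $\sigma_\pi=\{1\}$: the claim is false at the level of one positive operator, and the continuous-time structure must enter, which your outline never uses at this point. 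Two standard repairs: (a) work at the level of the generator and use the theorem (Greiner) that the boundary spectrum $\sigma(B)\cap i\R$ is additively cyclic when $0=s(B)$ is a pole of the resolvent; quasi-compactness makes this set finite, and a finite additively cyclic subset of $i\R$ is $\{0\}$; or (b) if $\e^{i\theta}\in\sigma(V_1)$ with $\theta\neq 0$, spectral mapping for the point spectrum gives some $i\beta\neq0$ in $\sigma_p(B)$, and applying the single-operator cyclicity to $V_t$ for a $t$ with $\beta t/2\pi$ irrational produces infinitely many peripheral eigenvalues of $V_t$, contradicting $r_{e2}(V_t)<r(V_t)$. Either fix is short, but it is the heart of the theorem and is missing from your proposal.
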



The first definition of the essential spectrum is useful since it is proved in~\cite[Theorem 2]{Kato58} that it is invariant under strictly singular perturbation, and it is known from~\cite{Pelczynski} that in $L^1$ spaces weakly compact operators are strictly singular.
Combining these two results we deduce that if $A$ is a closed linear operator and $B$ a weakly compact operator in a $L^1$ space, then $\sigma_{e1}(A+B)=\sigma_{e1}(A).$

\medskip

Clearly we have $\sigma_{e1}(A)\subset\sigma_{e2}(A)$ but the two sets are not equal in general.
However it is proved in~\cite[Theorem 6.5]{LebowSchechter} (see also~\cite{Nussbaum}) that when $A$ is bounded the essential spectral radius is the same for both (and actually all standard) definitions,
{\it i.e.} $r_{e1}(A)=r_{e2}(A).$

\

\subsection{Proof of the asynchronous exponential growth}

This subsection is dedicated to the proof of the second part of Theorem~\ref{maintheorem} about the exponential convergence of $(T_t)_{t\geq0}$ to the rank-one projection $f\mapsto\langle f,\phi\rangle G.$
The idea is to apply Theorem~\ref{th:abstraitissimo}.

For the infinitesimal generator $\cA$ of the semigroup $(T_t)_{t\geq0}$ we have $s(\cA)=0.$
Indeed using Proposition~2.2 in~\cite{Webb87} one can define the growth bound $\omega_0(\cA)$ as $\omega_0(\cA):=\lim_{t\to\infty}\log(\|T_t\|)/t$ and
Lemma~\ref{lm:boundednesspsi} guarantees that $\omega_0(\cA)=0$ in $L^1_\alpha$ for any $\alpha>1$ (notice that by contractiveness of $T_t$ in $L^1(\phi)$ it is also true for $\alpha=1$).
Since $s(\cA)\leq\omega_0(\cA)$ and $0\in\sigma(\cA),$ we deduce that $s(\cA)=0.$
Hence if we can apply Theorem~\ref{th:abstraitissimo} we obtain the exponential convergence of $(T_t)_{t\geq0}$ to a positive finite rank projection $P.$
Then we easily deduce from the uniqueness of the Perron eigenfunction $G$ and the conservation law~\eqref{eq:mass_cons} that this projection is given by $Pf=\langle f,\phi\rangle G$ (see the proof of Corollary 5.4 in~\cite{BG1} for details).
It only remains to check the assumptions of Theorem~\ref{th:abstraitissimo}.

The conservation property~\eqref{eq:mass_cons} guarantees that $\langle T_tf,\phi\rangle$ is bounded in $t.$
The fact that the growth bound of $\cA$ is zero ensures that $r(T_t)=\e^{\omega_0(\cA)t}=1$ for all $t>0.$
The only missing assumption which has to be verified is that $r_{e2}(T_t)<1$ for some $t>0,$
meaning that the semigroup $(T_t)_{t\geq0}$ is quasi-compact (see~\cite{EN} for instance).
The end of the section is devoted to the proof of this property by using the  Duhamel formula in Lemma~\ref{lm:Duhamel}, which is recalled here
\[T_t=S_t+\int_0^tS_{t-s}\cF_+T_s\,\d s.\]
First we check that $r(S_t)<1$ for all $t>0$ (and any $\alpha\in\R$).
Then we prove that $\int_0^tS_{t-s}\cF_+T_s\,\d s$ is weakly compact in $L^1_\alpha$ when $\alpha>\max(1,\a+\gamma_1-\gamma_0).$
Using the properties of the essential spectral radius announced in Section~\ref{ssec:essential_spectrum} we deduce that
\[r_{e_2}(T_t)=r_{e1}(T_t)=r_{e1}(S_t)\leq r(S_t)<1.\]

\

The last inequality is easily obtained from the explicit formulation of $S_t.$

\begin{lemma}
For any $\alpha\in\R$ and $t>0$ one has $r(S_t)<1.$
\end{lemma}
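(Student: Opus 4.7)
The plan is to establish a norm bound $\|S_t\|_{\scrL(L^1_\alpha)}\leq C\,\e^{-Kt}$ for some positive constants $C,K$ depending on $\alpha$, from which $r(S_t)\leq \e^{-Kt}<1$ follows via $r(S_t)=\lim_{n\to\infty}\|S_{nt}\|^{1/n}$.

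Starting from the explicit formula of Proposition~\ref{prop:explicit_St}, I would perform the change of variable $x=X(t,y)$ in the integral defining $\|S_tf\|_{L^1_\alpha}$ for $f\geq0$ (relying on $J(t,X(t,y))\,\d x=\d y$ and $X(-s,X(t,y))=X(t-s,y)$) to derive the identity
\[\|S_tf\|_{L^1_\alpha}=\e^{-\lambda t}\int_0^\infty f(y)\,\e^{-\int_0^tB(X(u,y))\,\d u}(1+X(t,y))^\alpha\,\d y=\int_0^\infty f(y)\,m_t(y)(1+y)^\alpha\,\d y,\]
where $m_t(y):=\e^{-\lambda t}\e^{-\int_0^tB(X(u,y))\,\d u}(1+X(t,y))^\alpha/(1+y)^\alpha$. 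Positivity of $S_t$ then yields $\|S_t\|_{\scrL(L^1_\alpha)}\leq \|m_t\|_\infty$, so the question reduces to bounding this multiplier uniformly in $y$.

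The next step is to control $m_t$ using the growth of $B$ at infinity. Writing $\alpha_+:=\max(\alpha,0)$, the estimate~\eqref{eq:Xbound} entails $(1+X(t,y))^\alpha/(1+y)^\alpha\leq\e^{\alpha_+\tau_1t}$. To dominate this growth by the exponential $B$-term, I would select $M\geq x_0$ large enough that $B_0M^{\gamma_0}>\alpha_+\tau_1$ (possible since $\gamma_0>0$) and set $T_M:=\int_0^M\d z/\tau(z)$, finite by~\eqref{as:tau0}. Monotonicity of the flow in the initial condition together with $X(T_M,0)=M$ gives $X(u,y)\geq X(u,0)\geq M$ for all $u\geq T_M$ and $y\geq 0$, while $X(u,y)\geq y\geq M$ for any $y\geq M$ and $u\geq 0$. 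In either case $B(X(u,y))\geq B_0M^{\gamma_0}$, whence for $t\geq T_M$,
\[\int_0^tB(X(u,y))\,\d u\geq B_0M^{\gamma_0}(t-T_M).\]

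Combining the two ingredients would yield $\|m_t\|_\infty\leq \e^{B_0M^{\gamma_0}T_M}\e^{-Kt}$ for $t\geq T_M$, with $K:=\lambda+B_0M^{\gamma_0}-\alpha_+\tau_1>0$, and the conclusion $r(S_t)\leq \e^{-Kt}<1$ follows immediately from the spectral radius formula applied along the sequence $(nt)_n$. The main (rather mild) subtlety is the tension between the weight inflation $\e^{\alpha_+\tau_1t}$ along the flow and the damping $\e^{-\int_0^tB(X(u,y))\,\d u}$: the argument exploits crucially the unboundedness of $B$, in that waiting the finite time $T_M$ forces every characteristic into the region $\{x\geq M\}$, where the damping rate $B_0M^{\gamma_0}$ can be made as large as needed to overcome any fixed weight exponent $\alpha$.
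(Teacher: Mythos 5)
Your proposal is correct and follows essentially the same route as the paper: bound the multiplier along characteristics by choosing a threshold size ($M$, the paper's $x_1$) with $B_0M^{\gamma_0}$ large enough to beat the weight inflation $\e^{\alpha\tau_1 t}$, use that all characteristics enter $\{x\geq M\}$ after the finite time $T_M$ (the paper's $t_1$, finite thanks to~\eqref{as:tau0}), and deduce exponential decay of $\|S_t\|_{\scrL(L^1_\alpha)}$. Your conclusion via $r(S_t)=\lim_n\|S_{nt}\|^{1/n}$ is just a rephrasing of the paper's $r(S_t)=\e^{\omega_0(\cA_0)t}<1$.
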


\begin{proof}
Let $\alpha\in\R$ and $x_1\geq x_0$ such that $B_0x_1^{\gamma_0}>\alpha k\tau_1-\lambda.$
Consider $t_1>0$ defined by $X(t_1,0)= x_1.$
For all $t\geq t_1$ we have
\begin{align*}
\|S_tf\|_{L^1_\alpha}&\leq\int_0^\infty |f(x)|\e^{-\int_{t_1}^tB(X(s,x))ds}\e^{-\lambda t}(1+X(t,x))^\alpha\,\d x\\
&\leq\e^{-B_0x_1^{\gamma_0}(t-t_1)}\e^{-\lambda t+\alpha\tau_1t}\|f\|_{L^1_\alpha}.
\end{align*}
We deduce that $\omega_0(\cA_0)\leq\alpha\tau_1-\lambda-B_0x_1^{\gamma_0}<0$ and consequently \mbox{$r(S_t)=\e^{\omega_0(\cA_0)t}<1$} when $t>0.$
\end{proof}

\

Denote by $W(L^1_\alpha)$ the space of weakly compact operators in $L^1_\alpha,$
which is a (two-sided) ideal of the Banach algebra $\scrL(L^1_\alpha).$
For proving the weak compactness of $\int_0^tS_{t-s}\cF_+T_s\,\d s$ we iterate the Duhamel formula to get the identity
\[\int_0^tS_{t-s}\cF_+T_s\,\d s=\int_0^tS_{t-s}\cF_+S_s\,\d s+\int_0^t\bigg(\int_0^sS_{s-u}\cF_+S_u\,\d u\bigg) \cF_+T_{t-s}\,\d s,\]
and we prove that $\int_0^tS_{t-s}\cF_+S_s\,\d s$ and then $\int_0^t(\int_0^sS_{s-u}\cF_+S_u\,\d u) \cF_+T_{t-s}\,\d s$ belong to $W(L^1_\alpha).$
To do so we use that $W(L^1_\alpha)$ has the \emph{strong convex compactness property} (see~\cite{Schluchtermann,Weis}, 
or \cite{MK} for a direct proof in Lebesgue spaces).
This means that if a function \mbox{$U:(0,t)\to W(L^1_\alpha)$} is
\begin{itemize}
\item[-] strongly measurable, {\it i.e.} $\forall f\in L^1_\alpha$ the function $s\mapsto U(s)f$ is measurable,
\item[-] and strongly bounded, {\it i.e.} $\sup_{0<s<t}\|U(s)\|_{\scrL(L^1_\alpha)}<\infty,$
\end{itemize}
then $\int_0^tU(s)\,\d s\in W(L^1_\alpha).$
In our case unfortunately the strong boundedness assumption is not satisfied.
But it is easy to check that it can be replaced by the strong integrability assumption, which is that
\[s\mapsto\|U(s)\|_{\scrL(L^1_\alpha)}\ \text{is integrable on}\ (0,t).\]
It readily follows from the dominated convergence theorem together with the property that $W(L^1_\alpha)$ is closed in $\scrL(L^1_\alpha)$ (see for instance~\cite[Theorem II.C.6]{Wojtaszczyk}).
Notice that Schl\"uchtermann suggested in~\cite{Schluchtermann} that the assumption of strong boundedness should be replaced by the uniform integrability (which is even weaker than strong integrability)
\[\sup_{f\in L^1_\alpha}\int_0^t\|U(s)f\|_{L^1_\alpha}\d s<\infty\quad\text{and}\quad\lim_{|\Omega|\to0}\sup_{f\in L^1_\alpha}\int_\Omega\|U(s)f\|_{L^1_\alpha}\d s=0.\]

\

We start with a lemma.

\begin{lemma}\label{lm:S1}
Let $\alpha>\a+\gamma_1-\gamma_0$ and $\beta\in(\a,\infty)\cap[\alpha-\gamma_1,\infty).$
For any $t>0$ the integral $\int_0^tS_{t-s}\cF_+S_s\,\d s$ is a bounded linear operator from $L^1_\alpha$ to $L^1_\beta.$
More precisely there exists $C=C(\alpha,\beta)>0$ such that for all $t>0$ and all $f\in L^1_\alpha$
\[\bigg\|\int_0^tS_{t-s}\cF_+S_sf\,\d s\,\bigg\|_{L^1_\beta}\leq Ct^{1-(\beta+\gamma_1-\alpha)/\gamma_0}\e^{\beta\tau_1t}\|f\|_{L^1_\alpha}.\]
\end{lemma}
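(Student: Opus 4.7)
The plan is to estimate $\|S_{t-s}\cF_+S_s f\|_{L^1_\beta}$ pointwise in $s$ and then integrate in $s\in(0,t)$. The natural factorisation is
\[
L^1_\alpha \xrightarrow{\,S_s\,} L^1_{\beta+\gamma_1} \xrightarrow{\,\cF_+\,} L^1_\beta \xrightarrow{\,S_{t-s}\,} L^1_\beta,
\]
which is legitimate thanks to the hypotheses: $\beta\geq\alpha-\gamma_1$ makes the first arrow fit the moment-creation estimate of Lemma~\ref{lm:moments_creation_S}; $\beta>\a$ ensures $\wp_\beta<+\infty$, so Lemma~\ref{lm:F+def} bounds $\cF_+\colon L^1((1+B)(1+x)^\beta)\to L^1_\beta$; and the upper bound $B(x)\leq B_1\max(1,x^{\gamma_1})$ from \textbf{(H$B$)} yields the continuous inclusion $L^1_{\beta+\gamma_1}\hookrightarrow L^1((1+B)(1+x)^\beta)$ needed to glue the first two arrows together.

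Chaining the three bounds produces, for $s\in(0,t)$,
\[
\|S_{t-s}\cF_+S_s f\|_{L^1_\beta}
\lesssim \e^{\beta\tau_1(t-s)}\|S_s f\|_{L^1_{\beta+\gamma_1}}
\lesssim s^{-(\beta+\gamma_1-\alpha)/\gamma_0}\,\e^{\beta\tau_1 t+\gamma_1\tau_1 s}\,\|f\|_{L^1_\alpha}.
\]
The spurious factor $\e^{\gamma_1\tau_1 s}$ can be replaced by $\e^{\gamma_1\tau_1 t}$ and reabsorbed into an $\e^{\beta\tau_1 t}$-type bound by retaining the sharper $\e^{-\lambda s}$ ($\lambda>0$) visible in the proof of Lemma~\ref{lm:moments_creation_S}. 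Integrating the remaining singularity,
\[
\int_0^t s^{-(\beta+\gamma_1-\alpha)/\gamma_0}\,\d s = \frac{t^{1-(\beta+\gamma_1-\alpha)/\gamma_0}}{1-(\beta+\gamma_1-\alpha)/\gamma_0},
\]
yields exactly the claimed prefactor $t^{1-(\beta+\gamma_1-\alpha)/\gamma_0}$.

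The main obstacle is that this direct estimate only provides an integrable $s$-singularity when $(\beta+\gamma_1-\alpha)/\gamma_0<1$, whereas the lemma permits $\beta$ arbitrarily large in $(\a,\infty)\cap[\alpha-\gamma_1,\infty)$. To cover the remaining range I would redistribute the moment creation between $S_s$ and $S_{t-s}$: pick an intermediate weight $\beta_0\in[\alpha-\gamma_1,\beta]$ with $\beta_0>\a$, route the chain through $L^1_{\beta_0+\gamma_1}\xrightarrow{\cF_+}L^1_{\beta_0}\xrightarrow{S_{t-s}}L^1_\beta$ (invoking Lemma~\ref{lm:moments_creation_S} at both ends), and exploit the Beta-type identity $\int_0^t s^{-b}(t-s)^{-a}\,\d s=t^{1-a-b}B(1-a,1-b)$ with $a+b=(\beta+\gamma_1-\alpha)/\gamma_0$. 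Choosing $\beta_0$ so that both $a,b<1$ recovers the same exponent $t^{1-(\beta+\gamma_1-\alpha)/\gamma_0}$; inserting further intermediate weights extends the argument over the whole permitted range of $\beta$.
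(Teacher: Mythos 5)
Your first chain $L^1_\alpha\xrightarrow{S_s}L^1_{\beta+\gamma_1}\xrightarrow{\cF_+}L^1_\beta\xrightarrow{S_{t-s}}L^1_\beta$ is exactly the paper's mechanism (Lemmas~\ref{lm:F+def} and~\ref{lm:moments_creation_S} glued by $B(x)\lesssim 1+x^{\gamma_1}$), and it does prove the lemma when $\beta+\gamma_1-\alpha<\gamma_0$. The genuine gap is in how you extend to the full range of $\beta$. Your Beta-identity fix with one fixed intermediate weight $\beta_0$ requires \emph{both} exponents $a=(\beta-\beta_0)/\gamma_0$ and $b=(\beta_0+\gamma_1-\alpha)/\gamma_0$ to be $<1$, hence only covers $\beta+\gamma_1-\alpha<2\gamma_0$; and the closing remark that ``inserting further intermediate weights'' handles the rest cannot work: the composition $S_{t-s}\cF_+S_s$ contains only two smoothing factors, $\cF_+$ creates no moments (it loses $\gamma_1$), so the total singularity $(\beta+\gamma_1-\alpha)/\gamma_0$ can only ever be split between the two variables $s$ and $t-s$, and once it is $\geq2$ no splitting makes both pieces integrable over the whole interval $(0,t)$ simultaneously. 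This range is not vacuous (e.g.\ when $\gamma_1\geq2\gamma_0$ even the values $\beta$ slightly above $\alpha$ needed for the tightness argument in Proposition~\ref{prop:S1W} escape your method), so the statement is not proved as it stands.

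The paper's resolution is not more weights but an $s$-dependent choice: split the time integral at $s=t/2$. On $(0,t/2)$ route through a single $\delta\in(\a,\alpha+\gamma_0-\gamma_1)\cap[\alpha-\gamma_1,\beta]$ (nonempty precisely because $\alpha>\a+\gamma_1-\gamma_0$), so the $s$-singularity $s^{-(\delta+\gamma_1-\alpha)/\gamma_0}$ is integrable, while the strong factor $(t-s)^{-(\beta-\delta)/\gamma_0}$ is harmless there since $t-s\geq t/2$ and only contributes a power of $t$. On $(t/2,t)$ use your original chain through $L^1_{\beta+\gamma_1}$, where now $s\geq t/2$ tames the $s$-singularity. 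Both halves integrate to $t^{1-(\beta+\gamma_1-\alpha)/\gamma_0}$ times the exponential factor, giving the claim for every admissible $\beta$. A secondary inaccuracy: your absorption of the spurious $\e^{\gamma_1\tau_1 s}$ ``by retaining $\e^{-\lambda s}$'' is unjustified unless $\lambda\geq\gamma_1\tau_1$; using $s\leq t$ instead only alters the exponential rate in $t$, which is harmless for the fixed-$t$ applications but should be stated rather than claimed to recover $\e^{\beta\tau_1 t}$ exactly.
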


\begin{proof}
Let $\alpha$ and $\beta$ satisfy the assumptions of the lemma, and choose $\delta\in(\a,\alpha+\gamma_0-\gamma_1)\cap[\alpha-\gamma_1,\beta].$
Using Lemmas~\ref{lm:F+def} and~\ref{lm:moments_creation_S} we have uniformly in $0<s<t/2$ and $f\in L^1_\alpha$
\begin{align*}
\|S_{t-s}\cF_+S_sf\|_{L^1_\beta}&\lesssim \e^{\beta\tau_1(t-s)}(t-s)^{-(\beta-\delta)/\gamma_0}\|\cF_+S_sf\|_{L^1_\delta}\\
&\lesssim \e^{\beta\tau_1t}t^{-(\beta-\delta)/\gamma_0}\|S_sf\|_{L^1_{\delta+\gamma_1}}\\
&\lesssim \e^{\beta\tau_1t}t^{-(\beta-\delta)/\gamma_0}s^{-(\delta+\gamma_1-\alpha)/\gamma_0}\|f\|_{L^1_\alpha},
\end{align*}
and for $s\in(t/2,t)$
\[\|S_{t-s}\cF_+S_sf\|_{L^1_\beta}\!\lesssim\e^{\beta\tau_1t}\|\cF_+S_sf\|_{L^1_\beta}\!\lesssim\e^{\beta\tau_1t}\|S_sf\|_{L^1_{\beta+\gamma_1}}\!\lesssim \e^{\beta\tau_1t}t^{-(\beta+\gamma_1-\alpha)/\gamma_0}\|f\|_{L^1_\alpha}.\]
We deduce that for any $t>0$ the function $s\mapsto\|S_{t-s}\cF_+S_s\|_{\scrL(L^1_\alpha,L^1_\beta)}$ is integrable on $(0,t)$ and after integration we get
\[\int_0^t\|S_{t-s}\cF_+S_s\|_{\scrL(L^1_\alpha,L^1_\beta)}\,\d s\lesssim \e^{\beta\tau_1t}\,t^{1-(\beta+\gamma_1-\alpha)/\gamma_0}.\]
\end{proof}

\begin{proposition}\label{prop:S1W}
For each $\alpha>\max(1,\a+\gamma_1-\gamma_0)$ and each $t>0$ the operator $\int_0^tS_{t-s}\cF_+S_s\,\d s$ is weakly compact in $L^1_\alpha.$
\end{proposition}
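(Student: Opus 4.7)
The strategy I would pursue is to verify the Dunford--Pettis criterion directly: weak compactness of $T := \int_0^t S_{t-s}\cF_+ S_s\,\d s$ on $L^1_\alpha$ is equivalent to uniform integrability of $\{Tf:\|f\|_{L^1_\alpha}\leq 1\}$ with respect to the measure $(1+x)^\alpha\,\d x$. This splits into two conditions---tightness at infinity and equi-absolute continuity on sets of small measure---which I would handle separately.

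Tightness at infinity is immediate from Lemma~\ref{lm:S1}. For any $\beta > \alpha$ (all sufficiently large $\beta$ being admissible there), $T$ is bounded from $L^1_\alpha$ into $L^1_\beta$, so
\[
\int_R^\infty |Tf|(1+x)^\alpha\,\d x \leq (1+R)^{-(\beta-\alpha)}\|Tf\|_{L^1_\beta}\lesssim (1+R)^{-(\beta-\alpha)}\|f\|_{L^1_\alpha},
\]
which tends to $0$ uniformly in $f$ as $R\to\infty$.

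For equi-absolute continuity I would combine the explicit formula for $S_s$ from Proposition~\ref{prop:explicit_St} with Fubini in order to rewrite $Tf(x) = \int_0^\infty K(x,y) f(y)\,\d y$, where $K(x,y)$ is obtained by integrating over the possible pairs $(s, z) \in (0,t)\times \supp\wp$ that parametrize a single-fragmentation trajectory from $y$ at time $0$ to $x$ at time $t$ (growth $y\mapsto X(s,y)$, fragmentation $X(s,y)\mapsto z X(s,y)$, growth $z X(s,y)\mapsto X(t-s, z X(s,y)) = x$). The crux is then to establish a pointwise bound
\[
K(x,y)\,(1+x)^\alpha \leq G(x)\,(1+y)^\alpha \qquad \text{for all } x,y > 0,
\]
with $G \in L^1((0,\infty))$. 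Fubini would then yield $\int_A |Tf|(1+x)^\alpha\,\d x \leq \|f\|_{L^1_\alpha}\int_A G(x)\,\d x$, which vanishes uniformly in $\|f\|_{L^1_\alpha}\leq 1$ as $|A|\to 0$. Combined with the tightness above, Dunford--Pettis delivers the weak compactness.

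The main obstacle is this pointwise estimate on $K$. It will stem from a competition between three contributions: the polynomial growth $B(\xi)\leq B_1(1+\xi)^{\gamma_1}$ entering at the fragmentation point; the Jacobian factor $1/z$ from $\cF_+$, whose integrability against $\wp$ is exactly what the assumption $\alpha > \a + \gamma_1 - \gamma_0$ ensures; and the survival weights $\e^{-\int_0^s B(X(-u,\cdot))\d u}\cdot \e^{-\int_0^{t-s} B(X(-u,\cdot))\d u}$ along the trajectory, which, thanks to $B(\xi)\geq B_0\xi^{\gamma_0}$ for $\xi$ large, decay at least like $\e^{-c(t)\max(x,y)^{\gamma_0}}$ uniformly in $s$---I would obtain this by splitting $(0,t)$ into $(0,t/2)$ and $(t/2,t)$ so that either $t-s$ or $s$ stays bounded away from $0$ on each piece. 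Since $\gamma_0>0$, the exponential decay ultimately dominates the polynomial growth, producing $G$ with exponentially decaying tail and hence integrable.
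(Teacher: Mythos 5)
Your overall framework (Dunford--Pettis for the integrated operator, with tightness supplied by Lemma~\ref{lm:S1}) is reasonable, and the tightness half is correct. The gap is in the equi-integrability half: the ``crux'' kernel bound $K(x,y)(1+x)^\alpha\le G(x)(1+y)^\alpha$ with $G\in L^1$ is asserted rather than proved, and the mechanism you offer for it does not work. The claimed survival-weight decay $\e^{-c(t)\max(x,y)^{\gamma_0}}$ uniformly in $s$ is false: for $s\in(0,t/2)$ the first leg of the trajectory can be arbitrarily short, so the exponential weights give decay only in (essentially) $x$, while $y$ can be much larger than $x$ when $z$ is small; consequently the factor $B$ evaluated at the pre-fragmentation size $X(s,y)$, of order $(1+y)^{\gamma_1}$ with $\gamma_1$ possibly much larger than $\alpha$, is not dominated by anything you have identified. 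The actual control in that regime comes from integrating the first survival factor in $s$, or from the $z$-marginal of $\wp$, not from a pointwise-in-$(s,z)$ bound.

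More fundamentally, your argument never invokes the standing hypotheses (i) or (ii) of Theorem~\ref{maintheorem}, under which the paper proves the proposition, and they cannot be dispensed with. When $\wp$ has atoms, your kernel representation $\int_0^tS_{t-s}\cF_+S_sf\,\d s\,(x)=\int K(x,y)f(y)\,\d y$ only exists after a change of variables $s\mapsto y$ along the one-fragmentation trajectories, whose Jacobian equals $\tau(y)\bigl[\frac{\tau(zX(s,y))}{z\,\tau(X(s,y))}-1\bigr]$; it is bounded away from $0$ precisely when $\tau$ is constant and $z\le1-\ep$ (this is the factor $\frac{1-z}{z}$ and the bound $\bigl|\frac{1}{1-z}(t-zy-\Omega)\bigr|\le|\Omega|/\ep$ in the paper's case (ii)), and it can vanish identically for admissible non-constant $\tau$ (e.g.\ $\tau$ linear for large sizes), reflecting the concentration phenomena behind the periodic behaviour known for $\tau(x)=x$, $\wp=\delta_{1/2}$. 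In the absolutely continuous case the paper avoids the $s$-change of variables altogether: it shows that $\cF_+S_s$ is weakly compact for each $s>0$ (Dunford--Pettis, using $\wp\in L^1(0,1)$ and $X(s,0)>0$), then concludes via the ideal property of $W(L^1_\alpha)$ and the strong convex compactness property, with integrability in $s$ of the operator norms. To repair your proof you would have to split into these two cases and establish the kernel/equi-integrability estimate separately in each, at which point you essentially recover the paper's argument.
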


\begin{proof}
Let $\alpha>\max(1,\a+\gamma_1-\gamma_0)$ and $t>0.$
We split the proof into two parts, corresponding to the two cases in Theorem~\ref{maintheorem}.
When $\wp$ is absolutely continuous we first prove that $\cF_+S_s$ is weakly compact for all $s>0$ and then use the strong convex compactness property.
For the case $\tau=const$ and $\supp\wp\subset[\ep,1-\ep]$ we prove directly the weak compactness of $\int_0^tS_{t-s}\cF_+S_s\,\d s.$

\smallskip
\noindent{\bf Case (i):} {\it $\wp$ absolutely continuous.}
From Lemmas~\ref{lm:F+def} and~\ref{lm:moments_creation_S} we easily get that for any $s>0$ and any $\beta>\alpha$ the operator $\cF_+S_s$ maps continuously $L^1_\alpha$ in $L^1_\beta.$
This guarantees the tightness of the image of the unit ball of $L^1_\alpha$ under $\cF_+S_s$ for any $s>0.$
Now we look at the uniform integrability.
Following the lines of the proof of Lemma~\ref{lm:moments_creation_S} we get for any $\Omega\subset(0,\infty),$ $s>0,$ and $f\in L^1_\alpha$
\begin{align*}
&\int_\Omega|\cF_+S_sf(x)|(1+x)^\alpha\,\d x\leq\int_0^1\int_{\Omega/z}B(x)|S_sf(x)|(1+zx)^\alpha\d x\,\wp(\d z)\\
&\quad\leq B_1\int_0^1\int_{X(-s,\frac{\Omega}{z})\cap(0,\infty)}|f(x)|\e^{-\int_0^sB(X(u,x))\d u}\e^{-\lambda s}(1+X(s,x))^{\alpha+\gamma_1}\d x\,\wp(\d z)\\
&\quad\lesssim s^{-\gamma_1/\gamma_0}\e^{(\alpha+\gamma_1)\tau_1s}\int_0^\infty\left[\int_{(0,1)\cap\frac{\Omega}{X(s,x)}}\wp(\d z)\right]\,|f(x)|(1+x)^{\alpha}\d x.
\end{align*}
The term between the brackets is small uniformly in $x>0$ when $|\Omega|$ is small because $\wp\in L^1(0,1)$ and $\big|\frac{\Omega}{X(s,x)}\big|=\frac{|\Omega|}{X(s,x)}\leq\frac{|\Omega|}{X(s,0)},$ with $X(s,0)>0$ due to Assumption~\eqref{as:tau0}.
This proves the uniform integrability condition and by the Dunford-Pettis theorem the operator $\cF_+S_s$ is weakly compact in $L^1_\alpha$ for any $s>0.$
Since $W(L^1_\alpha)$ is an ideal of $\scrL(L^1_\alpha)$ the operator $S_{t-s}\cF_+S_s$ is also weakly compact for each $s\in(0,t].$
Finally using the strong convex compactness property of $W(L^1_\alpha)$ we get the weak compactness of $\int_0^tS_{t-s}\cF_+S_s\,\d s$ in $L^1_\alpha.$
Clearly $s\mapsto S_{t-s}\cF_+S_s$ is strongly measurable due to the strong continuity of $(S_t)_{t\geq0},$
and the strong integrability readily follows from the inequality
\[\|S_{t-s}\cF_+S_sf\|_{L^1_\alpha}\lesssim s^{-(\delta+\gamma_1-\alpha)/\gamma_0}\|f\|_{L^1_\alpha}\]
that we established in the proof of Lemma~\ref{lm:S1}, with $\delta<\alpha+\gamma_0-\gamma_1.$

\medskip
\noindent{\bf Case (ii):} {\it$\tau=1$ and $\supp\wp\subset[\ep,1-\ep].$}
Lemma~\ref{lm:S1} guarantees that the integral $\int_0^tS_{t-s}\cF_+S_s\,\d s$ sends continuously $L^1_\alpha$ into $L^1_\beta$ for any $\beta>\alpha.$
Consequently the image of the unit ball of $L^1_\alpha$ under this operator is tight.
For the uniform integrability we write for $\Omega\subset(0,\infty)$ and $f\in L^1_\alpha$
\begin{align*}
&\int_\Omega\left|\int_0^tS_{t-s}\cF_+S_sf(x)\,\d s\right|(1+x)^\alpha\d x\\
&\quad\leq\int_\Omega\int_0^t|\cF_+S_sf(x-t+s)|\e^{-\int_0^{t-s}B(x-u)\d u}\,\d s\,(1+x)^\alpha\d x\\
&\quad=\int_\Omega\int_0^t \int_0^1B\Bigl(\frac {x-t+s}z\Bigr)\Big|S_sf\Bigl(\frac {x-t+s}z\Bigr)\Big|\frac{\wp(\d z)}{z}\,\e^{-\int_0^{t-s}B(x- u)\d u}\d s\,(1+x)^\alpha\d x\\
&\quad\leq B_1\int_\Omega\int_0^t \int_0^1\Bigl(1+\frac xz\Bigr)^{\gamma_1}\Big|f\Bigl(\frac {x-t+s}z-s\Bigr)\Big|\\
&\hspace{36mm}\e^{-\int_0^s B(\frac {x-t+s}z- u)\d u-\int_0^{t-s}B(x-u)\d u}\,\frac{\wp(\d z)}{z}\,\d s\,(1+x)^\alpha\d x.
\end{align*}
For $s\leq\frac t2$ we have
\[\e^{-\int_0^{t-s}B(x-u)\d u}\leq \e^{-\int_0^{\frac t2}B(x-u)\d u}\leq \mathbf1_{0< x< x_0}+\mathbf1_{x\geq x_0}\e^{-\frac t2 B_0(x-\frac t2)_+^{\gamma_0}}\]
and for $s\geq\frac t2$
\[\e^{-\int_0^s B(\frac {x-t+s}z- u)\d u}\leq \e^{-\int_0^{\frac t2} B(\frac {x-t+s}z- u)\d u}\leq \mathbf1_{0< x< x_0}+\mathbf1_{x\geq x_0}\e^{-\frac t2 B_0(x-t)_+^{\gamma_0}}.\]
The function $x\mapsto \big(1+\frac xz\big)^{\gamma_1}\big(\mathbf1_{0< x< x_0}+\mathbf1_{x\geq x_0}\e^{-\frac t2 B_0(x-t)_+^{\gamma_0}}\big)$ is clearly bounded on $(0,\infty),$ uniformly in $z\geq\ep,$
so we get
\begin{align*}
\int_\Omega\bigg|\int_0^tS_{t-s}\cF_+S_sf(x)&\,\d s\bigg|(1+x)^\alpha\d x\\
&\lesssim\int_0^t \int_0^1\int_\Omega\Big|f\Bigl(\frac {x-t+s}z-s\Bigr)\Big|(1+x)^\alpha\d x\,\frac{\wp(\d z)}{z}\,\d s.
\end{align*}
Set $\varphi(s,z,x)=\frac {x-t+s}z-s$ and do the change of variable $x\to y=\varphi(s,z,x).$\\
We obtain, since $\d y=\frac{\d x}{z}$ and $\varphi(s,z,\cdot)^{-1}(y)=z(y+s)+t-s\leq y+t,$
\begin{align*}
\int_\Omega\bigg|\int_0^tS_{t-s}&\cF_+S_sf(x)\,\d s\bigg|(1+x)^\alpha\d x\\
&\lesssim\int_0^t \int_0^1\int_{\varphi(s,z,\Omega)}|f(y)|(1+y+t)^\alpha\d y\,\wp(\d z)\,\d s\\
&=\int_0^\infty\int_0^t \int_0^1\mathbf1_{\varphi(s,z,\cdot)^{-1}(y)\in\Omega}\,\wp(\d z)\,\d s\,|f(y)|(1+y+t)^\alpha\d y\\
&=\int_0^\infty\int_0^1\int_0^t \mathbf1_{s\in\frac{1}{1-z}(t-zy-\Omega)}\,\d s\,\wp(\d z)\,|f(y)|(1+y+t)^\alpha\d y.
\end{align*}
Since $\big|\frac{1}{1-z}(t-zy-\Omega)\big|\leq\frac{|\Omega|}{\ep}$ for all $z\in\supp\wp\subset[\ep,1-\ep]$ we get
\[\int_\Omega\bigg|\int_0^tS_{t-s}\cF_+S_sf(x)\,\d s\bigg|(1+x)^\alpha\d x\lesssim|\Omega|\,\|f\|_{L^1_\alpha}\]
and the family $\big\{\int_0^tS_{t-s}\cF_+S_sf\,\d s : \|f\|_{L^1_\alpha}\leq1\big\}$ is uniformly integrable.
The Dunford-Pettis theorem yields the result.

\end{proof}

\begin{corollary}
For each $\alpha>\max(1,\a+2\gamma_1-2\gamma_0)$ and each $t>0$ the operator $\int_0^t\big(\int_0^sS_{s-u}\cF_+S_u\,\d u\big) \cF_+T_{t-s}\,\d s$ is weakly compact in $L^1_\alpha.$
\end{corollary}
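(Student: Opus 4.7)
The plan is to mirror the scheme of Proposition~\ref{prop:S1W}: establish pointwise (in $s$) weak compactness of $J(s)\cF_+T_{t-s}$ on $L^1_\alpha$, writing $J(s) := \int_0^s S_{s-u}\cF_+S_u\,\d u$, and then invoke the strong convex compactness property of $W(L^1_\alpha)$ (using strong integrability in place of uniform boundedness, as discussed just before Lemma~\ref{lm:S1}) to conclude weak compactness of the integral.

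For the pointwise weak compactness I would pick an intermediate weight $\beta$ on which $J(s)$ is weakly compact. Proposition~\ref{prop:S1W} applies whenever $\beta > \max(1,\a+\gamma_1-\gamma_0)$, and since $\gamma_1 \geq \gamma_0$ the hypothesis $\alpha > \max(1,\a+2\gamma_1-2\gamma_0)$ entails $\alpha > \max(1,\a+\gamma_1-\gamma_0)$, so one can choose $\beta \in (\max(1,\a+\gamma_1-\gamma_0),\alpha)$ with in addition $\beta > \alpha-\gamma_1$. I would then factor
\[
L^1_\alpha \xrightarrow{T_{t-s}} L^1_{\beta+\gamma_1} \xrightarrow{\cF_+} L^1_\beta \xrightarrow{J(s)} L^1_\alpha,
\]
using Lemma~\ref{lm:moments_creation_T} for the moment creation of $T_{t-s}$ (applicable because $\beta+\gamma_1>\alpha$), Lemma~\ref{lm:F+def} for the boundedness of $\cF_+$, and Lemma~\ref{lm:S1} for the boundedness of $J(s):L^1_\beta \to L^1_\alpha$ as well as $J(s):L^1_\beta \to L^1_{\alpha+\varepsilon}$ for some $\varepsilon>0$. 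The weak compactness of $J(s)$ on $L^1_\beta$, combined with the moment gain into $L^1_{\alpha+\varepsilon}$, yields tightness and uniform integrability in $L^1_\alpha$ of $J(s)(B_{L^1_\beta})$, hence weak compactness of $J(s):L^1_\beta \to L^1_\alpha$ by Dunford--Pettis. Composing with the bounded operator $\cF_+T_{t-s}:L^1_\alpha\to L^1_\beta$ and using that $W(L^1_\alpha)$ is a two-sided ideal gives the weak compactness of $J(s)\cF_+T_{t-s}$ on $L^1_\alpha$.

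The main obstacle is the strong integrability $\int_0^t \|J(s)\cF_+T_{t-s}\|_{\scrL(L^1_\alpha)}\,\d s < \infty$. The bound coming from Lemma~\ref{lm:moments_creation_T} carries a singularity of the form $(t-s)^{-(\beta+\gamma_1-\delta)/\gamma_0}$ as $s\to t$, while Lemma~\ref{lm:S1} produces a factor of the form $s^{1-(\alpha+\gamma_1-\beta)/\gamma_0}$ as $s\to 0$; the parameter $\beta$ (together with the auxiliary $\delta<\alpha$ in Lemma~\ref{lm:moments_creation_T}) has to be tuned so that both exponents are strictly greater than $-1$ simultaneously. This is possible precisely because $\alpha > \a+2\gamma_1-2\gamma_0$: the $\gamma_1$ moments lost to $\cF_+$ must be recovered through two successive moment-creation steps — once via $T_{t-s}$ and once via the $S_u$ inside $J(s)$ — each of which gains at most $\gamma_0$ moments, which is the origin of the factor $2$ in the threshold.
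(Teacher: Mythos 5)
Your overall scheme (pointwise weak compactness of $\big(\int_0^sS_{s-u}\cF_+S_u\,\d u\big)\cF_+T_{t-s}$ plus strong integrability, then the convex compactness property) is the same as the paper's, and the pointwise part is fine — though it can be done more directly: since $\gamma_1\geq\gamma_0$, the hypothesis gives $\alpha>\max(1,\a+\gamma_1-\gamma_0)$, so Proposition~\ref{prop:S1W} already yields $\int_0^sS_{s-u}\cF_+S_u\,\d u\in W(L^1_\alpha)$, and composing with $\cF_+T_{t-s}\in\scrL(L^1_\alpha)$ and using the ideal property suffices, with no two-space Dunford--Pettis transfer and no constraint $\beta>1$. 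The genuine gap is in the strong integrability step. With your single factorization $L^1_\alpha\xrightarrow{T_{t-s}}L^1_{\beta+\gamma_1}\xrightarrow{\cF_+}L^1_\beta\xrightarrow{J(s)}L^1_\alpha$ for \emph{all} $s\in(0,t)$, the two requirements are: integrability at $s\to0$ of $s^{1-(\alpha+\gamma_1-\beta)/\gamma_0}$, i.e.\ $\beta>\alpha+\gamma_1-2\gamma_0$, and integrability at $s\to t$ of $(t-s)^{-(\beta+\gamma_1-\delta)/\gamma_0}$ for some $\delta<\alpha$, i.e.\ $\beta<\alpha+\gamma_0-\gamma_1$. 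Since $\alpha$ appears with coefficient one on both sides, these are simultaneously satisfiable only when $2\gamma_1<3\gamma_0$, \emph{independently} of $\alpha$ and $\a$; for instance $\gamma_0=1,\ \gamma_1=2$ forces $\beta>\alpha$ and $\beta<\alpha-1$ at once, for every admissible $\alpha$. So your closing claim that the tuning "is possible precisely because $\alpha>\a+2\gamma_1-2\gamma_0$" is incorrect, and the argument as written fails under the standing hypotheses (which only give $\gamma_1\geq\gamma_0$); your extra requirement $\beta>\max(1,\a+\gamma_1-\gamma_0)$ from the pointwise step restricts things even further.

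The way out, which is what the paper does, is to split the time integral at $t/2$ and use \emph{different} intermediate spaces on the two halves, so that you never pay both singularities through the same weight. On $(0,t/2)$ only $J(s)$ is singular: feed it from $L^1_{\alpha+\gamma_1}$, so Lemma~\ref{lm:S1} (source $\alpha+\gamma_1$, target $\alpha$) gives a factor $s$, while $T_{t-s}:L^1_\alpha\to L^1_{\alpha+2\gamma_1}$ is bounded uniformly because $t-s\geq t/2$ (Lemma~\ref{lm:moments_creation_T} with no blow-up). On $(t/2,t)$ only $T_{t-s}$ is singular: $J(s):L^1_\beta\to L^1_\alpha$ is bounded uniformly since $s\geq t/2$, and one chooses $\beta\in(\a+\gamma_1-\gamma_0,\alpha+\gamma_0-\gamma_1)$ and $\delta\in(\beta+\gamma_1-\gamma_0,\alpha)$ so that the exponent $(\beta+\gamma_1-\delta)/\gamma_0$ is strictly less than $1$; note that this $\beta$ need not exceed $1$ since it is only an intermediate weight. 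The nonemptiness of the interval $(\a+\gamma_1-\gamma_0,\alpha+\gamma_0-\gamma_1)$ is exactly where the hypothesis $\alpha>\a+2\gamma_1-2\gamma_0$ enters — so your heuristic about the origin of the factor $2$ is in the right spirit, but it is realized by decoupling the two singular endpoints, not by a single choice of $\beta$.
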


\begin{proof}
It is a consequence of Proposition~\ref{prop:S1W} and the strong convex compactness property.
Fix $\alpha>\max(1,\a+\gamma_1-\gamma_0)$ and $t>0.$
Since $\int_0^sS_{s-u}\cF_+S_u\,\d u\in W(L^1_\alpha)$ and $\cF_+T_{t-s}\in\scrL(L^1_\alpha),$ the operator $\big(\int_0^sS_{s-u}\cF_+S_u\,\d u\big) \cF_+T_{t-s}$
is weakly compact for any $s\in(0,t).$
For checking the strong integrability on $(0,t)$ we use Lemmas~\ref{lm:F+def},~\ref{lm:moments_creation_T} and~\ref{lm:S1}.
Uniformly in $s\in(0,t/2)$ we have
\begin{align*}
\Big\|\Big(\int_0^sS_{s-u}\cF_+S_u\,\d u\Big) \cF_+T_{t-s}f\Big\|_{L^1_\alpha}\lesssim s\|\cF_+T_{t-s}f\|_{L^1_{\alpha+\gamma_1}}&\lesssim s\|T_{t-s}f\|_{L^1_{\alpha+2\gamma_1}}\\
&\lesssim s\|f\|_{L^1_\alpha}.
\end{align*}
Uniformly in $s\in(t/2,t)$ we have for any $\beta\in(\a+\gamma_1-\gamma_0,\alpha+\gamma_0-\gamma_1)$ and $\delta\in(\beta+\gamma_1-\gamma_0,\alpha)$
\begin{align*}
\Big\|\Big(\int_0^sS_{s-u}\cF_+S_u\,\d u\Big) \cF_+T_{t-s}f\Big\|_{L^1_\alpha}\lesssim\|\cF_+T_{t-s}f\|_{L^1_\beta}&\lesssim\|T_{t-s}f\|_{L^1_{\beta+\gamma_1}}\\
&\lesssim (t-s)^{-(\beta+\gamma_1-\delta)/\gamma_0}\|f\|_{L^1_\alpha}.
\end{align*}
So $s\mapsto\big\|\big(\int_0^sS_{s-u}\cF_+S_u\,\d u\big) \cF_+T_{t-s}f\big\|_{L^1_\alpha}$ is integrable on $(0,t)$ and we can apply the strong convex compactness property.

\end{proof}

\

\section{About the Osgood condition}
\label{sec:Osgood}

In this section we consider the case when the Osgood condition is satisfied
\be\label{as:Osgood}\lim_{x\to0}\int_x^1\frac{\d x}{\tau(x)}=+\infty,\ee
meaning that~\eqref{as:tau0} is not fulfilled.
Replacing Assumption~\eqref{as:tau0} by
\[\frac{B}{\tau}\in L^1(0,1),\quad\text{and}\quad\exists r\geq0,\ \sup_{0<x<1}x^{-r}\int_0^x\wp(dz)<+\infty\quad\text{and}\quad \frac{x^r}{\tau(x)}\in L^1(0,1)\]
still guarantees the existence and uniqueness of $(\lambda,G,\phi)$ (see~\cite{DG10}),
and $\phi$ still has a linear growth at $+\infty$~\cite[Theorem~1.9]{BCG} while $\phi(x)\sim const\times\e^{\Lambda(x)}$ when $x\to0$~\cite[Theorem~1.10]{BCG}.
Using these estimates on $\phi$ the proof of the generation of the semigroup $(T_t)_{t\geq0}$ can be readily adapted to the new assumptions.

Notice that in the particular case of the self-similar fragmentation, {\it i.e.} $\tau(x)=x$ and $B(x)=x^\gamma$ with $\gamma>0,$ a necessary and sufficient condition for the existence and uniqueness of $G$ is given in~\cite{BTK}
(and we easily check that $\lambda=1$ and $\phi(x)=\phi_0\, x$ verify~\eqref{eq:dual_Perron} for $\phi_0>0$ a suitable normalizing constant). 

\

The following result ensures that under the Osgood condition the convergence of $(T_t)_{t\geq0}$ to the projector $P:f\mapsto\langle f,\phi\rangle G$ cannot be uniform with respect the the initial distribution in $L^1(\phi).$
\begin{theorem}
Under Assumption~\eqref{as:Osgood} we have for all $t\geq0$
\[\| T_t-P\|_{\scrL(L^1(\phi))}=2.\]
\end{theorem}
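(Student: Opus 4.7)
The plan is to prove the two inequalities $\|T_t - P\|_{\scrL(L^1(\phi))} \leq 2$ and $\|T_t - P\|_{\scrL(L^1(\phi))} \geq 2$ separately.

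\emph{Upper bound.} First I would note that $\|T_t - P\|_{\scrL(L^1(\phi))} \leq 2$ follows from the triangle inequality, since both $T_t$ and $P$ have operator norm at most $1$ on $L^1(\phi)$. For $T_t$ this comes from positivity combined with the mass-conservation identity~\eqref{eq:mass_cons}, which still holds under the replacement of~\eqref{as:tau0} by~\eqref{as:Osgood} as pointed out at the beginning of Section~\ref{sec:Osgood}. For $P = \langle \cdot,\phi\rangle G$, the bound $|\langle f,\phi\rangle| \leq \int|f|\phi\,\d x = \|f\|_{L^1(\phi)}$ combined with $\|G\|_{L^1(\phi)}=1$ from~\eqref{eq:dual_Perron} yields $\|P\|_{\scrL(L^1(\phi))} \leq 1$.

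\emph{Lower bound.} The heart of the argument is to exhibit a sequence of normalized initial data on which $T_tf_n$ and $Pf_n$ concentrate on essentially disjoint sets. I would take
\[
f_n := \frac{\mathbf{1}_{(0,\ep_n)}}{\int_0^{\ep_n}\phi\,\d y}, \qquad \ep_n \to 0^+,
\]
which is a nonnegative element of $L^1(\phi)$ with unit norm (the denominator is finite and positive because $\phi$ is continuous and positive on $(0,1)$). Then $\langle f_n,\phi\rangle = 1$ and hence $Pf_n = G$. The support-propagation property $\supp(T_t f) \subset [0, X(t,R)]$ whenever $\supp f\subset[0,R]$, recalled in Section~\ref{sec:semigroups}, yields $\supp(T_t f_n) \subset [0, X(t,\ep_n)]$.

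The Osgood condition~\eqref{as:Osgood} now enters precisely through the flow: writing $X(t,x) = F^{-1}(F(x)+t)$ with $F(x) = \int_0^x \d y/\tau(y)$, assumption~\eqref{as:Osgood} gives $F(0^+) = -\infty$ and hence $X(t,\ep_n) \to 0$ as $n\to\infty$. Combining this with mass conservation $\|T_t f_n\|_{L^1(\phi)} = 1$ and splitting the norm at $X(t,\ep_n)$, I would estimate
\begin{align*}
\|T_t f_n - G\|_{L^1(\phi)}
&= \int_0^{X(t,\ep_n)} |T_t f_n - G|\,\phi\,\d x + \int_{X(t,\ep_n)}^{+\infty} G\,\phi\,\d x \\
&\geq \int_0^{X(t,\ep_n)}(T_t f_n - G)\,\phi\,\d x + \int_{X(t,\ep_n)}^{+\infty} G\,\phi\,\d x \\
&= 2 - 2\int_0^{X(t,\ep_n)} G\,\phi\,\d x \xrightarrow[n\to\infty]{} 2,
\end{align*}
using $\int_0^\infty G\phi\,\d x = 1$ in the third line. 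This gives $\|T_t - P\|_{\scrL(L^1(\phi))} \geq 2$.

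\emph{Main obstacle.} I do not anticipate any serious difficulty, since all the heavy lifting has already been done earlier in the paper. The entire content of the argument is the observation that the Osgood condition forces characteristics starting near $0$ to remain near $0$ in finite time, i.e.\ $X(t,0^+)=0$; the rest is mass conservation together with an elementary disjoint-support estimate.
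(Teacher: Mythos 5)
Your proof is correct and follows essentially the same route as the paper: take unit-mass data concentrated near the origin (so $Pf_n=G$), use mass conservation and the support-propagation property, and observe that under~\eqref{as:Osgood} the characteristics keep that mass near $0$, so $T_tf_n$ and $G$ carry asymptotically disjoint mass, giving the lower bound $2$. The differences are cosmetic (the paper normalizes by $\frac{1}{\eta\phi}$ and splits at a fixed $R$ with small tail of $G\phi$ rather than at $X(t,\ep_n)$); just note that $F$ must be based at an interior point, e.g.\ $F(x)=\int_1^x\d y/\tau(y)$, since $\int_0^x\d y/\tau(y)$ diverges under the Osgood condition.
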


\begin{proof}
Fix $t\geq0.$
First we have
\[\| T_t-P\|_{\scrL(L^1(\phi))}\leq\| T_t\|_{\scrL(L^1(\phi))}+\| P\|_{\scrL(L^1(\phi))}=2.\]
For the other inequality we consider the initial distribution $f_\eta(x):=\frac{1}{\eta\phi(x)}\mathbf1_{0<x<\eta}$ for $\eta>0$ small enough (to be determined later).
For any $R>0$ and any $\eta>0$ we have $Pf_\eta=G$ and
\begin{align*}
\|T_t f_\eta- G\|_{L^1(\phi)}&=\!\int_0^R\!|T_t f_\eta(x)-G(x)|\phi(x)\,\d x
+\!\int_R^\infty\!|T_t f_\eta(x)-G(x)|\phi(x)\,\d x\\
&\geq\int_R^\infty G(x)\phi(x)\,\d x-\int_R^\infty T_t f_\eta(x)\phi(x)\,\d x\\
&\hspace{20mm}+\int_0^RT_t f_\eta(x)\phi(x)\,\d x-\int_0^RG(x)\phi(x)\,\d x
\end{align*}
Let $\epsilon>0$ and $R>0$ such that $\int_R^\infty G\phi\geq1-\epsilon,$ and then $\int_0^R G\phi\leq\epsilon.$
Under assumption~\eqref{as:Osgood} the characteristic curves of the transport semigroup $(S_t)_{t\geq0}$ do not reach the boundary $0$ in finite time, {\it i.e.} $t_*(x)=-\infty$ for all $x>0.$
Consequently we can find $\eta$ small enough such that $\Supp T_tf_\eta\subset[0,R]$ and then
\[\|T_t f_\eta-G\|_{L^1_\alpha}\geq2(1-\epsilon).\]
\end{proof}

%
%

\end{document}